\begin{document}

%  use the AMS-Euler Fraktur fonts
%%%%%%%%%%%%%%%%%%%%%%%%%%%%%%%%%%
\newfont{\teneufm}{eufm10}
\newfont{\seveneufm}{eufm7}
\newfont{\fiveeufm}{eufm5}
%%%%%%%%%%%%%%%%%%%%%%%%%%%%%%%%%
%
%  allow automatic size selection in math mode
%
%%%%%%%%%%%%%%%%%%%%%%%%%%%%%%%%%
\newfam\eufmfam
                    \textfont\eufmfam=\teneufm \scriptfont\eufmfam=\seveneufm
                    \scriptscriptfont\eufmfam=\fiveeufm
%%%%%%%%%%%%%%%%%%%%%%%%%%%%%%%%%

%
%  \frak works on a single symbol at a time...
%
\def\frak#1{{\fam\eufmfam\relax#1}}
%

%%%%%%%%%%%%%%%%%%%  bbb-matter

\newtheorem{thm}{Theorem}
\newtheorem{lemma}[thm]{Lemma}
\newtheorem{claim}[thm]{Claim}
\newtheorem{cor}[thm]{Corollary}
\newtheorem{prop}[thm]{Proposition}
\newtheorem{definition}{Definition}
\newtheorem{question}[thm]{Open Question}
\newtheorem{remark}[thm]{Remark}
\newtheorem{conjecture}[thm]{Conjecture}

\def\squareforqed{\hbox{\rlap{$\sqcap$}$\sqcup$}}
\def\qed{\ifmmode\squareforqed\else{\unskip\nobreak\hfil
\penalty50\hskip1em\null\nobreak\hfil\squareforqed
\parfillskip=0pt\finalhyphendemerits=0\endgraf}\fi}%%

\hyphenation{re-pub-lished}

\def\dist{\mathrm{dist}}
\def\btau{\overline\tau}
\def\ord{{\mathrm{ord}}}
\def\Nm{{\mathrm{Nm}}}
\def\Sp{{\mathrm{Sp}}}
\renewcommand{\vec}[1]{\mathbf{#1}}

\def \C{{\mathbb{C}}}
\def \F {{\mathbb{F}}}
\def \L{{\mathbb{L}}}
\def \K{{\mathbb{K}}}
\def \Z{{\mathbb{Z}}}
\def \N{{\mathbb{N}}}
\def \Q{{\mathbb{Q}}}
\def \R{{\mathbb{R}}}

\newcommand{\Fq}{{\mathbb F}_q}
\newcommand{\Fqx}{\Fq^\star}
\newcommand{\Fqr}{{\mathbb F}_{q^r}}
\newcommand{\Fqm}{{\mathbb F}_{q^m}}

\def\mand{\qquad \mbox{and} \qquad}

%%%%%%%%%%%%%%%  Topmatter %%%%%%%%%%%%%%%%%%

\title{\Large Decomposing Jacobians of Curves over Finite Fields in the Absence of Algebraic Structure}

\iffalse
\author{
\begin{tabular}{c}
Omran Ahmadi\footnote{Research supported by IPM}.
\\[2mm]School of Mathematics\\ Institute for Research in Fundamental Sciences (IPM), Tehran\\ Iran
\\[2mm]
Gary McGuire\footnote{ Research supported by the Claude Shannon 
Institute, Science
Foundation Ireland Grant 06/MI/006, email:gary.mcguire@ucd.ie } \\[2mm]
School of Mathematical Sciences\\
University College Dublin\\
Ireland
\end{tabular}}
\fi
%\baselineskip=16.3pt
%\parskip=14pt
\author{
 {\sc Omran Ahmadi}  \\
{School of Mathematics}\\{ Institute for Research in Fundamental Sciences (IPM)}\\
{ Tehran, Iran}\\
{\tt oahmadid@ipm.ir}
\and
{\sc Gary McGuire} \\
{School of Mathematical Sciences}\\ {University College Dublin} \\
{Ireland} \\
{\tt gary.mcguire@ucd.ie}
\and
{\sc Antonio Rojas-Le\'on}  \thanks{Partially supported by MTM2010-19298 (Min. Ciencia e Innovaci\'on) and FEDER} \\
{Department of Algebra}\\ {University of Seville} \\
{Spain} \\
{\tt arojas@us.es}
}
\date{}

%\begin{document}
\maketitle

\subsection*{Abstract}
We consider the issue of when the L-polynomial of one curve  over $\F_q$ divides
 the L-polynomial of another curve.
 We prove  a theorem which shows that divisibility follows
 from a hypothesis that two curves have the same 
 number of points over infinitely many extensions of a certain type,
 and one other assumption.
 We also present an application to a family of curves arising
 from a conjecture about exponential sums.
 We make our own conjecture about L-polynomials, and prove that this is equivalent
 to the exponential sums conjecture.
 \bigskip
 
Keywords: curve, Jacobian, supersingular, finite field, L-polynomial.

MSC: 14H45, 11M38

%\newpage

\section{Introduction}

Let $q=p^a$ where $p$ is a prime, and let $\F_q$ denote the finite field with $q$ elements.
Let  $C=C(\F_q)$ be a projective smooth absolutely irreducible
curve of genus $g$ defined over $\F_q$.
For any $n\geq 1$ let  $C(\F_{q^n})=C(\F_q) \otimes_{\F_q} \F_{q^n}$ be the set of
$\F_{q^n}$-rational points of $C$, and let
 $\#C(\F_{q^n})$ be the cardinality of $C(\F_{q^n})$.
 Similarly, if $\overline{\F_q}$ denotes a fixed algebraic closure of $\F_q$,
let $C(\overline{\F_q})=C(\F_q) \otimes_{\F_q} \overline{\F_q}$.

The divisor group of $C$ 
is the free abelian group generated by the points of $C(\overline{\F_q})$. 
Thus, a divisor is a formal sum $\sum n_P P$ over
all $P\in C(\overline{\F_q})$, where all but finitely many $n_P$ are 0.
The degree of a divisor is $\sum n_P $.
The divisor of a function in the function field $\overline{\F_q}(C)$
must have degree 0, and is called a principal divisor.
The quotient of the subgroup of degree 0 divisors by the principal divisors
is denoted $Pic^0 (C(\overline{\F_q}))$, and is canonically isomorphic to
the Jacobian of $C$, $Jac(C) (\overline{\F_q})$, after a point at infinity is chosen.
The Galois group $Gal(\overline{\F_q}/\F_q)$ acts on 
divisors and divisor classes, and we define
$Jac(C)=Jac(C)(\F_q)=Pic^0(C)=Pic^0 (C(\F_q))$ to be the divisor classes that are fixed
by every element of $Gal(\overline{\F_q}/\F_q)$.
The Jacobian $Jac(C)$
 is an abelian variety of dimension $g$ defined over $\F_q$.

The Frobenius map $\pi: 
x\mapsto x^q$ on $\overline{\F_q}$ induces a Frobenius map
 on $C(\overline{\F_q})$.
The elements of $C(\F_{q^n})$ are the fixed points of $\pi^n$.
The Frobenius morphism $\pi$ induces a map on divisor classes, 
and hence on the Jacobian,
 and hence a Frobenius endomorphism
 on the $\ell$-adic Tate module $V_\ell  (Jac(C))$.
Let $P_C(t)$ denote the characteristic polynomial
of the Frobenius endomorphism, which is known to have integer coefficients.
An abelian variety defined over $\F_q$ is called $\F_q$-simple if 
it is not isogenous over $\F_q$ to a product of abelian varieties of lower dimensions. 
An abelian variety is absolutely simple if it is $\overline{\F_q}$-simple.
If $Jac(C)$ is $\F_q$-simple 
then it can be shown that $P_C(X)=h(X)^e$ where $h(X)\in \mathbb{Z}[X]$ is 
irreducible over $\mathbb{Z}$ and $e\geq 1$.
We refer the reader to Waterhouse \cite{waterhouse} for 
these and further details about abelian varieties.

Given an abelian variety $A$ of dimension $g$ defined over $\F_q$, 
for a prime $\ell \not= p$ one defines $A[\ell]$ as the group of points
on $A$ (with values in an algebraic closure $\overline{k}$)
of order dividing $\ell$. Like in the classical case over $\mathbb{C}$
it can be shown that $A[\ell]$ is a $2g$-dimensional $\Z/\ell\Z$-vector
space. Things are different when $\ell =p$.
The \emph{$p$-rank} of $A$ is defined by
\begin{displaymath}
                         r_p (A) = \dim_{\mathbb{F}_p} A[p](\overline{k}),
\end{displaymath}
where $A[p](\overline{k})$ is the subgroup of $p$-torsion points over the algebraic closure. 
The $p$-rank can take any value between $0$ and  $g=\dim(A)$. 
When $r_p(A)=g$ we say that $A$ is ordinary.
The number $r_p(A)$ is invariant under isogenies over $k$, and satisfies $r_p (A_{1} \times A_{2} ) = r_p (A_{1}) + r_p (A_{2} )$.

The zeta function of $C$ is defined by 
\[
Z_C(t ) = exp \biggl( \sum_{n\geq 1} \#C(\F_{q^n})  \frac{t^n}{n} \biggr)
= exp \biggl( \sum_{n\geq 1} \# Fix(\pi^n)  \frac{t^n}{n} \biggr).
\]
It was shown by Artin and Schmidt (see Roquette \cite{rv}) 
that $Z_C(t)$ can be written in the form
  \[
   \frac{L_C(t)}{(1-t)(1-qt)}
   \]
  where $L_C(t) \in \mathbb{Z}[t]$ (called the L-Polynomial of $C$) is of degree $2g$.
  Weil showed that  $L_C(t)=t^{2g} P_C(1/t) $, and therefore 
factorizations of $P_C(t)$ are equivalent to factorizations of $L_C(t)$.

The characteristic polynomial of Frobenius
carries a lot of information about an abelian variety.
In fact, the isogeny classes of abelian varieties
are completely classified by their characteristic polynomials,
as the following theorem of Tate shows.

\begin{thm}(Tate) \label{Tate} 
Let $A$ and $B$ be abelian varieties defined over $\mathbb{F}_q$. Then an
abelian variety $A$ is $\mathbb{F}_q$-isogenous to an abelian subvariety of
$B$ if and only if $P_A(t)$ divides $P_B(t)$ over $\mathbb{Q}[t]$. In
particular, $P_A(t) = P_B(t)$ if and only if $A$ and $B$ are
$\mathbb{F}_q$-isogenous.
\end{thm}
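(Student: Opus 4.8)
\medskip
\noindent\textbf{Proof proposal.}
The plan is to reduce the theorem to two deep inputs of Tate. For an abelian variety $X$ over $k=\mathbb{F}_q$, let $\pi$ be the $q$-power Frobenius endomorphism, $\ell\neq p$ a prime, $T_\ell X$ the $\ell$-adic Tate module and $V_\ell X=T_\ell X\otimes_{\Z_\ell}\Q_\ell$. The inputs are (i) that $\pi$ acts \emph{semisimply} on $V_\ell X$, and (ii) that the natural injection $\mathrm{Hom}_k(A,B)\otimes_\Z\Z_\ell\hookrightarrow\mathrm{Hom}_{\Z_\ell[\pi]}(T_\ell A,T_\ell B)$ is an \emph{isomorphism}; tensoring with $\Q_\ell$ gives $\mathrm{Hom}_k(A,B)\otimes\Q_\ell\cong\mathrm{Hom}_{\Q_\ell[\pi]}(V_\ell A,V_\ell B)$. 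Granting (i) and (ii), the rest of the argument is formal.

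I first dispatch the easy implication. If $A$ is $k$-isogenous to an abelian subvariety $A'\subseteq B$, then an isogeny $A\to A'$ induces a Frobenius-equivariant isomorphism $V_\ell A\xrightarrow{\sim}V_\ell A'$, so $P_A(t)=P_{A'}(t)$. By Poincar\'e complete reducibility there is an abelian subvariety $C\subseteq B$ with $A'+C=B$ and $A'\cap C$ finite, whence $A'\times C\to B$ is an isogeny and $P_B(t)=P_{A'}(t)P_C(t)$. Thus $P_A(t)=P_{A'}(t)$ divides $P_B(t)$; in particular, applying this with $A'=B$, the hypothesis that $A$ is $k$-isogenous to $B$ forces $P_A=P_B$ (both polynomials being monic, mutual divisibility gives equality).

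For the converse, assume $P_A(t)\mid P_B(t)$ in $\Q[t]$, hence in $\Q_\ell[t]$. By (i), $V_\ell A$ and $V_\ell B$ are finite-dimensional semisimple modules over $\Q_\ell[t]$ with $t$ acting as $\pi$; such a module is determined up to isomorphism by its characteristic polynomial, and one admits a $\Q_\ell[t]$-linear injection into another precisely when the first characteristic polynomial divides the second. Hence there is an injective element of $\mathrm{Hom}_{\Q_\ell[\pi]}(V_\ell A,V_\ell B)$. Injectivity of a linear map whose source has dimension at most that of its target is the non-vanishing of some maximal minor, so it cuts out a nonempty Zariski-open---hence $\ell$-adically open and dense---subset of the $\Q_\ell$-vector space $\mathrm{Hom}_{\Q_\ell[\pi]}(V_\ell A,V_\ell B)\cong\mathrm{Hom}_k(A,B)\otimes\Q_\ell$; since the $\Q$-subspace $\mathrm{Hom}_k(A,B)\otimes\Q$ is $\ell$-adically dense, it meets this subset, so we may pick $\phi\in\mathrm{Hom}_k(A,B)\otimes\Q$ with $V_\ell\phi$ injective. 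Writing $\phi=\psi/n$ with $\psi\in\mathrm{Hom}_k(A,B)$, $n\in\Z_{>0}$, the homomorphism $\psi$ has $V_\ell\psi$ injective as well; its kernel is therefore finite (an infinite kernel would contain a positive-dimensional abelian subvariety, contributing nonzero $\ell$-torsion to $\ker T_\ell\psi$), so $\psi$ exhibits $A$ as isogenous to the abelian subvariety $\psi(A)\subseteq B$. When moreover $P_A=P_B$ one has $\dim\psi(A)=\dim A=\dim B$, forcing $\psi(A)=B$, so $\psi$ is an isogeny $A\to B$; this establishes the remaining direction of the ``in particular'' statement.

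The genuine obstacle, which I would import rather than reprove, is the establishment of (i) and (ii)---the core of Tate's work on abelian varieties over finite fields. The isomorphism (ii) rests on the existence of polarizations together with the crucial finiteness theorem that, up to $k$-isomorphism, there are only finitely many abelian varieties over $\mathbb{F}_q$ in a fixed $k$-isogeny class; one plays this finiteness against the family of $\pi$-stable $\Z_\ell$-lattices in $V_\ell A$ to show that every $\pi$-equivariant endomorphism of $T_\ell A$ arises from $\mathrm{End}_k(A)\otimes\Z_\ell$, then passes to a pair $(A,B)$ via $A\times B$. Semisimplicity (i) falls out of the same analysis (alternatively, from separability of the minimal polynomial of $\pi$, via the Riemann hypothesis and positivity of a Rosati involution attached to a polarization). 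Once (i) and (ii) are in hand, everything above is linear algebra over $\Q_\ell$, Poincar\'e reducibility, and the elementary relation between isogenies and Tate modules, so no further difficulty arises.
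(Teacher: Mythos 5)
The paper does not prove this statement: it is quoted as a classical theorem of Tate and delegated to the literature (the reference given is Waterhouse), so there is no in-paper argument to compare yours against. Judged on its own terms, your proposal is a correct and entirely standard derivation of the quoted statement from the two genuine theorems of Tate that you isolate as inputs (i) and (ii) — semisimplicity of Frobenius on $V_\ell$ and the isomorphism $\mathrm{Hom}_k(A,B)\otimes\Z_\ell\cong\mathrm{Hom}_{\Z_\ell[\pi]}(T_\ell A,T_\ell B)$ — and it follows the same route as Tate's original paper: the easy direction via Poincar\'e reducibility, and the converse via the classification of finite-dimensional semisimple $\Q_\ell[t]$-modules by their characteristic polynomials (an embedding exists iff the characteristic polynomials divide), the Zariski-openness of the injectivity locus, and $\ell$-adic density of $\mathrm{Hom}_k(A,B)\otimes\Q$ to produce an actual homomorphism $\psi$ with finite kernel, hence an isogeny onto $\psi(A)\subseteq B$. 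Your handling of the small points (clearing denominators, finiteness of $\ker\psi$ from injectivity of $V_\ell\psi$, and the dimension count in the ``in particular'' clause) is sound. You are also right that (i) and (ii) are where all the depth lives and are appropriately imported rather than reproved; the only caveat is that your proof is therefore a reduction to Tate's main theorems rather than a self-contained argument, which is exactly the status the paper itself accords the result.
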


When $Jac(C)$ is not $\F_q$-simple it decomposes up to isogeny 
(by Poincare's theorem) into a product
of abelian varieties of smaller dimensions, and Tate's theorem 
shows that the characteristic polynomial
$P_C(t)$ is divisible by the characteristic polynomials of the subvarieties.
In this paper we are interested in this phenomenon of the
decomposition of $Jac(C)$.

It follows from this discussion that decomposing the Jacobian up to isogeny,
factorizing the characteristic polynomial, and factorizing the L-polynomial,
are all equivalent.

The decomposition of $Jac(C)$ has been studied in many papers before now,
see Aubry-Perret \cite{AubryPerret}, Paulhus \cite{Paulhus_decomposingjacobians} 
or Bauer-Teske-Weng \cite{journals/moc/BauerTW05}  for example,
and there are two well-known approaches.
The first  approach is to use the Kani-Rosen decomposition theorem \cite{KaniRosen},
applicable for many groups $G=Aut(C)$.
%The Kani-Rosen theorem concerns isogenies and idempotents in the group algebra $\mathbb{Q}[G]$.  

\begin{thm}\label{kr}  (Kani-Rosen)
Given a curve $C$, let $G\leq Aut(C)$ be a
finite group such that 
$G=H_1 \cup \cdots \cup H_t$ where
the $H_i$ are subgroups of $G$ such that
$H_i \cap H_j = \{ 1 \}$ if $i\not= j$.
Then we have the following isogeny relation
\[
 {\rm Jac}(C)^{t-1} \times {\rm Jac}({C/G})^{|G|} \sim {\rm Jac}({C/H_1})^{|H_1|}
 \times \cdots \times {\rm Jac}({C/H_t})^{|H_t|}.
 \]
\end{thm}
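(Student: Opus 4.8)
\medskip
\noindent\emph{Proof strategy.}
The plan is to use Tate's theorem (Theorem~\ref{Tate}) to trade the asserted isogeny for an equality of characteristic polynomials of Frobenius, and then to prove that equality by decomposing an $\ell$-adic Tate module of $\mathrm{Jac}(C)$ under the $G$-action. Since the characteristic polynomial of a product is the product of the characteristic polynomials, the asserted isogeny is equivalent to the identity
\[
\prod_{i=1}^{t} P_{C/H_i}(t)^{|H_i|}\;=\;P_C(t)^{\,t-1}\,P_{C/G}(t)^{\,|G|}
\]
in $\Z[t]$, so it suffices to prove this polynomial identity; here I assume, as one may, that $G$ and the $H_i$ are defined over $\F_q$ (otherwise one works over their field of definition, or over $\overline{\F_q}$).

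To prove the identity I would fix a prime $\ell\ne p$ and set $V=V_\ell(\mathrm{Jac}(C))$, on which $G$ and the Frobenius endomorphism $\pi$ act with commuting actions, both being defined over $\F_q$. After extending scalars to a splitting field for $G$, write $V=\bigoplus_\rho W_\rho\otimes\rho$, the sum over the irreducible representations $\rho$ of $G$, with multiplicity space $W_\rho=\mathrm{Hom}_G(\rho,V)$; since $\pi$ commutes with $G$ it preserves each $W_\rho$ and acts on $V$ as $\bigoplus_\rho(\pi|_{W_\rho}\otimes\mathrm{id}_\rho)$. Put $Q_\rho(t)=\det(t\cdot\mathrm{id}-\pi\mid W_\rho)$, so that $P_C(t)=\prod_\rho Q_\rho(t)^{\dim\rho}$. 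The geometric input is that for any $H\le G$ the pullback $\phi_H^{*}\colon\mathrm{Jac}(C/H)\to\mathrm{Jac}(C)$ of the quotient map $\phi_H\colon C\to C/H$ is an isogeny onto its image $\varepsilon_H\,\mathrm{Jac}(C)$, where $\varepsilon_H=\frac{1}{|H|}\sum_{h\in H}h$; this follows from $\phi_H^{*}\phi_{H,*}=\sum_{h\in H}h=|H|\varepsilon_H$ and $\phi_{H,*}\phi_H^{*}=[\,|H|\,]$. Passing to Tate modules, and using that $\phi_H$ is defined over $\F_q$, yields a $\pi$-equivariant identification $V_\ell(\mathrm{Jac}(C/H))\cong\varepsilon_H V=\bigoplus_\rho W_\rho\otimes\rho^{H}$, and therefore
\[
P_{C/H}(t)=\prod_\rho Q_\rho(t)^{\dim\rho^{H}}.
\]

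It then remains to compare the multiplicity of each $Q_\rho$ on the two sides of the target identity. Using $\dim\rho^{H}=\langle\mathrm{Res}_H\rho,\mathbf{1}\rangle=\frac{1}{|H|}\sum_{h\in H}\chi_\rho(h)$, the exponent of $Q_\rho(t)$ in $\prod_i P_{C/H_i}(t)^{|H_i|}$ is
\[
\sum_{i=1}^{t}|H_i|\,\dim\rho^{H_i}=\sum_{i=1}^{t}\sum_{h\in H_i}\chi_\rho(h)=\sum_{g\in G}\#\{\,i:g\in H_i\,\}\,\chi_\rho(g).
\]
By hypothesis $G=H_1\cup\cdots\cup H_t$ with $H_i\cap H_j=\{1\}$ for $i\ne j$, so every $g\ne1$ lies in precisely one $H_i$ while $g=1$ lies in all $t$ of them; that is, $\#\{i:g\in H_i\}=1+(t-1)\delta_{g,1}$. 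Hence the last sum equals $\sum_{g\in G}\chi_\rho(g)+(t-1)\chi_\rho(1)=|G|\,\dim\rho^{G}+(t-1)\dim\rho$, and plugging this back gives
\[
\prod_{i=1}^{t}P_{C/H_i}(t)^{|H_i|}=\left(\prod_\rho Q_\rho(t)^{\dim\rho^{G}}\right)^{|G|}\left(\prod_\rho Q_\rho(t)^{\dim\rho}\right)^{t-1}=P_{C/G}(t)^{|G|}\,P_C(t)^{t-1},
\]
which is the identity we wanted.

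The step I expect to be the main obstacle is the geometric input: identifying $V_\ell(\mathrm{Jac}(C/H))$ with $\varepsilon_H V_\ell(\mathrm{Jac}(C))$ \emph{compatibly with Frobenius}, plus the bookkeeping required when $\Q_\ell$ is not a splitting field for $G$ (extend scalars, argue, then descend, or group the $Q_\rho$ into Galois orbits) or when some $H_i$ is not defined over $\F_q$. Everything else is Tate's theorem together with elementary character theory. For what it is worth, the original Kani--Rosen argument sidesteps Tate's theorem: it runs the same combinatorics directly in $\mathrm{End}^{0}(\mathrm{Jac}(C))$, converting the group-ring identity $\sum_{i}|H_i|\,\varepsilon_{H_i}=(t-1)\cdot1+|G|\,\varepsilon_G$ in $\Q[G]$ into the asserted isogeny of images by way of Poincar\'e reducibility and a cancellation lemma for sums of idempotents in a semisimple $\Q$-algebra; that version has the merit of working over an arbitrary base field.
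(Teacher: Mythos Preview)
The paper does not actually prove this theorem: it is quoted from \cite{KaniRosen} as background and no argument is given. So there is no ``paper's own proof'' to compare against.

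Your argument is correct for the setting at hand (curves over $\F_q$, with $G$ defined over $\F_q$). The character-theoretic count is the right one: the partition hypothesis says each $g\neq 1$ lies in exactly one $H_i$ while $1$ lies in all $t$, which yields the idempotent identity $\sum_i |H_i|\varepsilon_{H_i}=(t-1)\cdot 1+|G|\varepsilon_G$ in $\Q[G]$, and hence the matching of exponents of each $Q_\rho$. The identification $V_\ell(\mathrm{Jac}(C/H))\cong \varepsilon_H V_\ell(\mathrm{Jac}(C))$ via $\phi_H^*$ is standard and is indeed Frobenius-equivariant once $H$ is defined over $\F_q$; the splitting-field bookkeeping is harmless since you only use the resulting polynomial identity, which already lives in $\Z[t]$.

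You have already put your finger on the main difference from the original source. Kani and Rosen work directly in $\mathrm{End}^0(\mathrm{Jac}(C))$: the same identity among idempotents $\varepsilon_{H_i}$, $\varepsilon_G$, $1$ in $\Q[G]$ is pushed into the endomorphism algebra, and a general lemma (an idempotent relation $\sum n_i e_i=0$ in a semisimple $\Q$-algebra forces the corresponding isogeny relation among the images) delivers the isogeny over an arbitrary base field. Your route trades that for Tate's theorem and Frobenius linear algebra, which is slick but intrinsically finite-field; theirs is base-field-agnostic and also gives the isogeny even when $G$ is not defined over the ground field. Both rest on exactly the same combinatorial core.
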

This usually yields a decomposition of the Jacobian and a factorization of $L_C(t)$.
For example, in the special case where $G$ is
the Klein 4-group with subgroups $H_1$, $H_2$, $H_3$, 
 the Kani-Rosen theorem implies an isogeny
 \[
 {\rm Jac}(C)^2 \times {\rm Jac}({C/G})^4 \sim {\rm Jac}({C/H_1})^2
 \times {\rm Jac}({C/H_2})^2\times {\rm Jac}({C/H_3})^2
 \]
 which implies the following L-polynomial relation
 \[
 L_C(t)\ L_{C/G} (t)^2=L_{C/H_1} (t)\ L_{C/H_2} (t)\ L_{C/H_3} (t).
 \]
 An example of a paper applying the theorem in the
case of the Klein-4-group is \cite{MR2648557}.
However, as pointed out in \cite{Paulhus_decomposingjacobians}, 
the Kani-Rosen theorem will not apply when $Aut(C)$ is cyclic.
Also, most curves have trivial automorphism group, so the theorem does not
apply to them. 

Related to this method is the fact that
the L-polynomial of a fibre product $C\times D$ is  divisible by the
L-polynomials of $C$ and $D$.

The second well-known approach is to use a theorem of Kleiman \cite{Kleiman},
also sometimes attributed to Serre,
which implies a decomposition of $Jac(C)$ whenever there
is a covering map $C\longrightarrow C'$.

%is the contravariant Picard functor (pullback map Jac(C_2) -> Jac(C) ) injective?

\begin{thm}\label{KleimanSerre} 
(Kleiman-Serre)
If there is a morphism of curves $C \longrightarrow C'$ that is defined over $\F_q$
then $L_{C'}(t)$ divides $L_C(t)$.
\end{thm}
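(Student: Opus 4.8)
The plan is to reduce the assertion to Tate's theorem (Theorem~\ref{Tate}) by realising $\mathrm{Jac}(C')$, up to $\F_q$-isogeny, as an abelian subvariety of $\mathrm{Jac}(C)$. We may assume the morphism $\phi\colon C\to C'$ is non-constant, hence a finite covering of some degree $d=\deg\phi\ge 1$, still defined over $\F_q$; this is the only case of interest, and indeed the statement is false for constant morphisms (e.g.\ a constant map $\mathbb{P}^1\to E$ to an $\F_q$-point of an elliptic curve).

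First I would use the two homomorphisms on divisor groups attached to $\phi$: the pullback $\phi^*\colon\mathrm{Div}(C')\to\mathrm{Div}(C)$, which sends a closed point of $C'$ to its fibre counted with ramification multiplicities, and the pushforward (norm) $\phi_*\colon\mathrm{Div}(C)\to\mathrm{Div}(C')$. Both preserve the degree and carry principal divisors to principal divisors, so they descend to homomorphisms $\phi^*\colon\mathrm{Pic}^0(C')\to\mathrm{Pic}^0(C)$ and $\phi_*\colon\mathrm{Pic}^0(C)\to\mathrm{Pic}^0(C')$; via the canonical identification $\mathrm{Pic}^0\cong\mathrm{Jac}$ these are homomorphisms of abelian varieties over $\F_q$. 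The one computation needed is the projection formula $\phi_*\circ\phi^*=[d]$ on $\mathrm{Jac}(C')$, which is already visible on divisors since $\phi_*\phi^*(P)=d\cdot P$ for every closed point $P$ of $C'$.

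From $\phi_*\circ\phi^*=[d]$ one gets $\ker(\phi^*)\subseteq\mathrm{Jac}(C')[d]$, so $\ker(\phi^*)$ is finite and $\phi^*$ is an $\F_q$-isogeny onto its image $A:=\phi^*(\mathrm{Jac}(C'))$, an abelian subvariety of $\mathrm{Jac}(C)$ defined over $\F_q$. Thus $\mathrm{Jac}(C')$ is $\F_q$-isogenous to an abelian subvariety of $\mathrm{Jac}(C)$, and Theorem~\ref{Tate} gives $P_{C'}(t)\mid P_C(t)$ in $\Q[t]$. Finally, since $L_C(t)=t^{2g}P_C(1/t)$ and $L_{C'}(t)=t^{2g'}P_{C'}(1/t)$ (with $g'$ the genus of $C'$), and both L-polynomials lie in $\Z[t]$ with constant term $1$, divisibility of the reciprocal polynomials is equivalent to divisibility of the L-polynomials; Gauss's lemma then gives $L_{C'}(t)\mid L_C(t)$, even in $\Z[t]$.

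The only step beyond bookkeeping is the projection formula $\phi_*\phi^*=[d]$ combined with the standard fact that a homomorphism of abelian varieties with finite kernel exhibits its source, up to isogeny, as an abelian subvariety of its target; the rest is the formalism of Jacobians together with Tate's theorem. An alternative, purely cohomological route is to note that $\phi^*\colon H^1(C'_{\overline{\F_q}},\Q_\ell)\to H^1(C_{\overline{\F_q}},\Q_\ell)$ on $\ell$-adic cohomology ($\ell\ne p$) is Frobenius-equivariant and injective (again because $\phi_*\phi^*=d$), so the characteristic polynomial of Frobenius on $H^1(C')$ divides that on $H^1(C)$; this is the same conclusion, as $P_C(t)$ is the reciprocal of $\det(1-t\,\pi\mid H^1(C_{\overline{\F_q}},\Q_\ell))$.
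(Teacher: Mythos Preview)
The paper does not supply its own proof of Theorem~\ref{KleimanSerre}; it is quoted as a known result of Kleiman (also attributed to Serre), with a citation to \cite{Kleiman}, so there is nothing to compare against directly.

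Your argument is correct and is essentially the standard one. Two minor remarks. First, a small slip of language: $\phi^*$ does not ``preserve the degree'' --- it multiplies degrees by $d$ --- but what you actually need and use is that it sends degree-zero divisors to degree-zero divisors, which it does. Second, your caveat that the morphism must be non-constant is well taken; the paper tacitly assumes this (the surrounding discussion speaks of ``covering maps'', and later Theorem~\ref{nomap} explicitly works with non-constant morphisms, with Theorem~\ref{non-constant morphism} recalling that a non-constant morphism of curves is surjective). The reduction to Tate's theorem via $\phi_*\phi^*=[d]$, and the passage from $P_{C'}\mid P_C$ to $L_{C'}\mid L_C$ by reciprocation, are both fine.
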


%Similar theorems are discussed in \cite{AubryPerret}.

These two approaches show that $L_C(t)$ is divisible by the
L-polynomial of a curve that is a quotient of $C$ or a morphic image of $C$.
In this article we will add a third approach, which may apply to curves
 with no nontrivial automorphisms,
and in situations where there are no nontrivial covering maps.
%These are situations where the Kani-Rosen and Kleiman-Serre methods no longer apply.
We replace the hypotheses of algebraic structure with a hypothesis about
the number of rational points, and show that the Jacobians of
such curves can exhibit similar decomposition behaviour.
Specifically, we will prove the following theorem.

\begin{thm}\label{main}
Let $C$ and $D$ be two smooth projective curves over $\F_q$.
Assume there exists a positive integer $k$ such that \begin{enumerate}
\item $\#C(\F_{q^m})=\#D(\F_{q^m})$ for every $m$ that is not divisible by $k$, and
\item the $k$-th powers of the roots of $L_C(t)$ are all distinct. 
\end{enumerate}
Then  $L_D(t)=q(t^k)L_C(t)$ for some polynomial $q(t)$ in $\Z[t]$. 
\end{thm}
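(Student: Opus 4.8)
The plan is to compare the zeta functions of $C$ and $D$ through their power-series logarithms and exploit the point-count hypothesis to pin down the relationship between the roots of $L_C(t)$ and $L_D(t)$. Write $L_C(t) = \prod_{i=1}^{2g}(1-\alpha_i t)$ and $L_D(t) = \prod_{j=1}^{2h}(1-\beta_j t)$, where $g$ and $h$ are the genera. Taking the logarithm of the zeta-function identity $Z_C(t) = L_C(t)/((1-t)(1-qt))$ gives the classical formula $\#C(\F_{q^m}) = q^m + 1 - \sum_i \alpha_i^m$, and similarly for $D$. Hypothesis (1) therefore says that $\sum_i \alpha_i^m = \sum_j \beta_j^m$ for every $m$ not divisible by $k$ (note the genera need not be equal a priori, and the $q^m+1$ terms cancel). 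The goal is to deduce that, as multisets, $\{\beta_j\}$ consists of $\{\alpha_i\}$ together with extra roots that come in full sets of $k$-th roots, i.e. $L_D(t)=q(t^k)L_C(t)$.

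First I would set up the algebraic framework: the identity $\sum_i \alpha_i^m = \sum_j \beta_j^m$ for all $m \not\equiv 0 \pmod k$ is equivalent, via Newton's identities applied to the "missing" power sums, to an identity of generating functions. Concretely, consider the formal sum $S(t) = \sum_{m\geq 1}\bigl(\sum_j \beta_j^m - \sum_i \alpha_i^m\bigr) t^m$; the hypothesis says $S(t)$ is supported on exponents divisible by $k$, so $S(t) = T(t^k)$ for some power series $T$. But $\sum_{m\geq 1}(\sum_i \alpha_i^m)t^m = -t\,L_C'(t)/L_C(t) + (\text{similar from }\frac{1}{(1-t)(1-qt)})$ — more cleanly, $\sum_m (\sum_i \alpha_i^m)t^m = \sum_i \frac{\alpha_i t}{1-\alpha_i t}$, a rational function whose partial-fraction poles are exactly at $t = \alpha_i^{-1}$. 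The condition that $\sum_j \frac{\beta_j t}{1-\beta_j t} - \sum_i \frac{\alpha_i t}{1-\alpha_i t}$ is a power series in $t^k$ forces its poles, counted with multiplicity, to be invariant under multiplication by a primitive $k$-th root of unity $\zeta$.

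The key step is then this Galois/symmetry argument on the roots. Because $L_C$ and $L_D$ have rational (indeed integer) coefficients, the multiset $\{\alpha_i\}$ is stable under $\mathrm{Gal}(\overline{\Q}/\Q)$, and likewise $\{\beta_j\}$. From the pole-invariance I would conclude: the multiset $\{\beta_j\} \setminus \{\alpha_i\}$ (difference of multisets — here I must first check $\{\alpha_i\}\subseteq\{\beta_j\}$ as multisets) is invariant under $\beta \mapsto \zeta\beta$. This is exactly where hypothesis (2) enters and is, I expect, the main obstacle: I need that the $\alpha_i^k$ are distinct to ensure that when I form the formal difference of the two rational functions and demand $\zeta$-invariance of the pole set, no $\alpha_i$ can be "absorbed" by pairing with a $\zeta$-orbit of other roots — the distinctness of $k$-th powers prevents cancellations that would let $\beta_j = \zeta \alpha_i$ stand in for $\alpha_i$ itself. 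Making the bookkeeping of multiplicities precise here, and ruling out these degenerate cancellations using condition (2), is the delicate part; the rest is formal. Once $\{\beta_j\}$ is shown to be the disjoint union of $\{\alpha_i\}$ and a $\zeta$-invariant multiset $M$, grouping $M$ into complete $\langle\zeta\rangle$-orbits gives $\prod_{\beta\in M}(1-\beta t) = q(t^k)$ for a polynomial $q$, and since this polynomial equals $L_D(t)/L_C(t)$ it automatically lies in $\Z[t]$ (it is a ratio of monic integer polynomials that divides in $\Q[t]$, hence in $\Z[t]$ by Gauss's lemma). Finally, I would double-check the edge behaviour: hypothesis (2) in particular forces $L_C$ to be separable, so $C$ has no repeated Frobenius eigenvalues, which is what makes the "subtract $L_C$ cleanly" step legitimate.
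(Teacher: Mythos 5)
Your proposal is correct in outline but takes a genuinely different route from the paper's proof. You work with the logarithmic derivative: the difference $R(t)=\sum_j \beta_j t/(1-\beta_j t)-\sum_i \alpha_i t/(1-\alpha_i t)$ is a power series in $t^k$, hence $R(\zeta t)=R(t)$ for $\zeta$ a primitive $k$-th root of unity, and comparing the (simple) poles at $t=1/\gamma$ shows that $\gamma\mapsto n_\gamma-m_\gamma$ (multiplicity of $\gamma$ among the $\beta_j$ minus multiplicity among the $\alpha_i$) is constant on each $\langle\zeta\rangle$-orbit. The ``delicate part'' you flag does close exactly as you hope: hypothesis (2) guarantees each orbit contains at most one $\alpha_i$ (and each with multiplicity one), so on an orbit through $\alpha_{i_0}$ the constant value of $n-m$ equals $n_{\zeta\alpha_{i_0}}\geq 0$, forcing $n_{\alpha_{i_0}}\geq 1$; this yields $\{\alpha_i\}\subseteq\{\beta_j\}$ as multisets and a $\zeta$-stable complement, which groups into factors $1-\gamma^kt^k$, giving $q(t^k)$. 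The paper instead exponentiates the sum restricted to $k\nmid m$ to obtain the polynomial identity $L_C(t)^kL_D^{(k)}(t^k)=L_D(t)^kL_C^{(k)}(t^k)$, deduces $L_C\mid L_D$ from the coprimality of $L_C(t)$ with $\prod_i\prod_{\zeta^k=1,\zeta\neq 1}(1-\zeta\alpha_i t)$ (hypothesis (2) again) together with squarefreeness of $L_C$, and then extracts the $q(t^k)$ shape via a separate lemma on equations of the form $f(x)^n=g_1(x^k)/g_2(x^k)$ (Lemma \ref{radical-subrings}). The two arguments manipulate the same generating-function data; yours is more direct at the level of roots and makes the role of hypothesis (2) completely transparent, while the paper's polynomial identity has the advantage of being reusable verbatim later (it reappears in the proof of Theorem \ref{12to6} and in the two-prime-factor case). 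One minor correction to your final remark: $L_C$ and $L_D$ are not monic, but they have constant term $1$ and hence are primitive, so the Gauss's-lemma step for integrality of the quotient still goes through.
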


%We provide  examples to illustrate our results.

The theorem of Tate (Theorem \ref{Tate}) when applied to two elliptic curves $E_1$ and $E_2$ defined over $\F_q$ says that when $E_1$ and $E_2$ have the same number of $\F_q$-rational points, there must be an isogeny between the curves. 
Thus, two curves having the same number of points cannot be a combinatorial accident;
it must happen because of an isogeny.
Theorem \ref{main} may be seen as a generalization of this result when curves $C_1$ and $C_2$ are of different genera. 
Theorem \ref{main} says that if the two curves have the same number of points over
all the prescribed extension fields, then this is not a combinatorial 
accident but is coming from a geometric relationship between their Jacobians.
There may not be a relationship between the curves, such as a morphism, but
there must be a relationship between their Jacobians.

In Section 2 we will recall some background that we need for the article.
In Section 3 we will prove Theorem \ref{main}.
In Section 4 we will apply our results to a family of curves, and state our own
conjectures about this family.
In Section 5 we give our motivation for this work, which was a conjecture
on exponential sums.
Finally in Section 6 we prove equivalence of the conjectures.

\section{Background}

\subsection{L-polynomials}\label{lprelim}

It is traditional to write
\[
L_C(t)=\prod_{i=1}^{2g} (1-\alpha_i t), \ \textrm{and} \ P_C(t)=\prod_{i=1}^{2g} (t-\alpha_i ).
\]
The $\alpha_i$ are called the Frobenius eigenvalues of $C$ (or of $Jac(C)$),
because they are the eigenvalues of the $\F_q$-Frobenius endomorphism
action on the  $\ell$-adic Tate module $V_\ell  (Jac(C))$.
We briefly recall some further well-known facts about L-polynomials (see 
\cite{DBLP:books/daglib/0084861} for example).
 If $L^{(n)}(t)$ denotes the L-polynomial of $C(\F_{q^n})$ 
then $$L^{(n)} (t)=\prod_{i=1}^{2g} (1-\alpha_i^n t).$$
The number of rational points for all $n\geq 1$ is given by
$$\# C(\F_{q^n})=q^n+1-\sum_{i=1}^{2g} \alpha_i^n.$$
This means that the coefficient of $t$ in $L^{(n)} (t)$
is equal to $\# C(\F_{q^n})-(q^n+1)$.

 The algebraic integers $\alpha_i$ can be labelled so that 
$\overline{\alpha_i}=\alpha_{i+g}$  and $\alpha_i \alpha_{i+g}=q$, so
$|\alpha_i|=\sqrt{q}$.

%It follows from these  properties that the characteristic polynomial $P_C(t)$ 
%of the Frobenius endomorphism on $Jac(C)$� has the form
%\begin{displaymath}
%P_C(t) = t^{2g} + a_{1}t^{2g-1} + \cdots+ a_{g-1}t^{g+1}+
%a_gt^g +qa_{g-1}t^{g-1}+ \cdots + a_1 q^{g-1} t+q^g.
%\end{displaymath}

\subsection{Morphisms}\label{CurveMorph}
In this section first we recall some theorems which will be used later.

\begin{thm}\cite[Theorem 2.3]{Silverman}\label{non-constant morphism}
Let $\phi: C_1\longrightarrow C_2$ be a morphism of curves. Then $\phi$ is either constant or surjective.
\end{thm}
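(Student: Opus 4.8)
The plan is to prove the dichotomy by a topological and dimension-theoretic argument, exploiting that $C_1$ is a projective (hence complete) curve and that $C_2$ is one-dimensional. The key point is that the image $\phi(C_1)$ is a closed irreducible subvariety of $C_2$, and such a subvariety of a curve can only be a single point or the whole of $C_2$.

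First I would establish that $\phi(C_1)$ is closed in $C_2$. This is the crucial step and rests on the completeness (properness) of $C_1$: a projective variety is complete, and the image of a complete variety under a morphism to a separated variety is closed. Hence $\phi(C_1)$ is a closed subset of $C_2$. Next I would observe that $\phi(C_1)$ is irreducible, since $C_1$ is irreducible (being a curve in the sense used here) and the continuous image of an irreducible space under $\phi$ is again irreducible in the Zariski topology. Combining these two observations, $\phi(C_1)$ is a closed irreducible subset of $C_2$, that is, a closed subvariety.

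Finally I would run the dimension argument. Since $C_2$ is a curve, $\dim C_2 = 1$, so any closed irreducible subvariety of $C_2$ has dimension $0$ or $1$. If $\dim \phi(C_1)=0$, then $\phi(C_1)$ is a single point and $\phi$ is constant. If $\dim \phi(C_1)=1$, then $\phi(C_1)$ is a closed subvariety of the irreducible curve $C_2$ of full dimension, which forces $\phi(C_1)=C_2$, so $\phi$ is surjective. These two cases are exhaustive, which establishes the claim.

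The main obstacle is the closedness of the image in the first step; the remaining steps are formal. This closedness genuinely uses the projectivity of $C_1$: for an affine source curve the image need not be closed and the conclusion can fail, so the completeness hypothesis is exactly what drives the proof.
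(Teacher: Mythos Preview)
Your argument is correct and is the standard one: the image $\phi(C_1)$ is closed because $C_1$ is complete (projective), it is irreducible because $C_1$ is, and a closed irreducible subset of a one-dimensional variety is either a point or the whole curve. There is nothing to compare against in the paper itself, since the paper does not supply a proof but merely cites the result from Silverman; your proof is essentially the one given there (Silverman, \emph{The Arithmetic of Elliptic Curves}, Theorem~II.2.3), which likewise reduces the question to the fact that the image of a morphism of projective varieties is a projective subvariety and then invokes the dimension dichotomy.
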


Now let $K$ be a field of characteristic $p>0$ and let $q=p^r$. If $C$ is a curve defined by a single equation $f=0$ over $K$
, then we can define a new curve $C^{(q)}$ which is the zero set of the equation $f^{(q)}=0$ where  $f^{(q)}$ is the polynomial obtained from $f$ by raising its coefficients to the power $q$. It follows that there is a natural morphism between $C$ and $C^{(q)}$ called the Frobenius morphism.

\begin{thm}\cite[Corollary 2.12]{Silverman}\label{factor-through-frob}
Every map $\psi:C_1\longrightarrow C_2$ of smooth curves over a field of characteristic $p>0$ factors as
$$
C_1\overset{\phi}{\longrightarrow} C_1^{(q)}\overset{\lambda}{\longrightarrow} C_2
$$
for some $q$ where the map $\phi$ is the  Frobenius map, and the map $\lambda$ is separable. 
\end{thm}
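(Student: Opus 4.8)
The plan is to translate the statement into the language of function fields, where it becomes the standard separable--inseparable decomposition of a finite field extension, and then translate back by means of the anti-equivalence between smooth projective curves over $K$ and their one-dimensional function fields. If $\psi$ is constant the factorization is trivial, so I would assume $\psi$ is nonconstant; by Theorem \ref{non-constant morphism} it is then surjective, and pulling back functions gives a finite extension $\psi^*\colon K(C_2)\hookrightarrow K(C_1)$. I would let $q=p^r$ be the inseparable degree of this extension, and let $F$ denote the separable closure of $\psi^* K(C_2)$ inside $K(C_1)$, so that $F/\psi^* K(C_2)$ is separable while $K(C_1)/F$ is purely inseparable of degree $q$.

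The heart of the argument is to identify $F$ explicitly as the field of $q$-th powers $K(C_1)^q$. First I would record the two basic facts about the Frobenius morphism $\phi\colon C_1\longrightarrow C_1^{(q)}$: it is purely inseparable of degree $q$, and its pullback realizes $\phi^* K(C_1^{(q)})$ as the subfield $K(C_1)^q\subseteq K(C_1)$, using that the base field is perfect so that $q$-th powers of constants are again constants. In particular $[K(C_1):K(C_1)^q]=q$. On the other hand, since $K(C_1)$ is a function field of one variable over a perfect field, any purely inseparable subextension of degree $q$ must already be $K(C_1)^q$: from $K(C_1)/F$ purely inseparable of degree $q=p^r$ one deduces $K(C_1)^{p^r}\subseteq F$, and comparing $[K(C_1):K(C_1)^q]=q=[K(C_1):F]$ forces $F=K(C_1)^q$.

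Combining these, I obtain $\psi^* K(C_2)\subseteq F = K(C_1)^q = \phi^* K(C_1^{(q)})$, with $F/\psi^* K(C_2)$ separable by construction. Hence $\psi^*$ factors as $K(C_2)\hookrightarrow K(C_1^{(q)})\overset{\phi^*}{\longrightarrow} K(C_1)$, where the first inclusion is a separable extension of function fields. Invoking the anti-equivalence between smooth projective curves and their function fields over $K$, this factorization of field maps corresponds to a factorization of morphisms $C_1\overset{\phi}{\longrightarrow} C_1^{(q)}\overset{\lambda}{\longrightarrow} C_2$, with $\phi$ the Frobenius map and $\lambda$ separable, which is exactly the claim.

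I expect the main obstacle to be the clean identification $F=K(C_1)^q$. One must make sure that the inseparable degree of $\psi$ is precisely the exponent $q$ appearing in the Frobenius morphism, and that perfectness of the base field is used correctly so that $\phi^* K(C_1^{(q)})$ is genuinely the full field of $q$-th powers rather than a proper subfield. Once this is in place, the remainder is routine bookkeeping with the multiplicativity of separable and inseparable degrees in the tower $\psi^* K(C_2)\subseteq F\subseteq K(C_1)$.
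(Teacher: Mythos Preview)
The paper does not give its own proof of this statement; it is quoted verbatim from Silverman \cite[Corollary~2.12]{Silverman} and used as a black box. Your proposal is essentially the standard argument that Silverman gives: pass to function fields, take the separable closure $F$ of $\psi^*K(C_2)$ inside $K(C_1)$, and identify the purely inseparable top piece $K(C_1)/F$ with the $q$-th power Frobenius by showing $F=K(C_1)^q=\phi^*K(C_1^{(q)})$. The identification step and the degree count $[K(C_1):K(C_1)^q]=q$ are carried out correctly, and you are right that perfectness of the base field is what makes $\phi^*K(C_1^{(q)})$ coincide with $K(C_1)^q$. One small caveat: the theorem as stated in the paper says only ``a field of characteristic $p>0$,'' while your argument (and Silverman's) uses perfectness; this is harmless here since every application in the paper is over a finite field.
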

The following is an immediate corollary of the above theorem as with the assumptions of the following corollary, $C_1^{(q)}=C_1$.

\begin{cor}\label{Separable}
Let $p$ be a prime number, $\mathbb{F}_p$ the finite field with $p$ elements, and let $C_1, C_2$ be smooth curves defined over $\mathbb{F}_p$.  Furthermore suppose that there is a map $\psi:C_1\longrightarrow C_2$. Then there is a map $\lambda:C_1\longrightarrow C_2$ which is separable. 
\end{cor}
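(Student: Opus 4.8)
The plan is to apply Theorem \ref{factor-through-frob} verbatim and then observe that the intermediate curve it produces is nothing but $C_1$ itself, precisely because $C_1$ is defined over the prime field $\mathbb{F}_p$.

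First I would invoke Theorem \ref{factor-through-frob} for the given map $\psi : C_1 \longrightarrow C_2$. (If $\psi$ is constant there is nothing to prove, since one may simply take $\lambda = \psi$; so the content is in the case where $\psi$ is non-constant, which by Theorem \ref{non-constant morphism} is then surjective.) The theorem provides a factorization
\[
C_1 \overset{\phi}{\longrightarrow} C_1^{(q)} \overset{\lambda}{\longrightarrow} C_2
\]
for some $q = p^r$, where $\phi$ is the Frobenius map and $\lambda$ is separable.

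Next I would identify $C_1^{(q)}$ with $C_1$. Write $C_1$ as the zero set of a polynomial $f$ all of whose coefficients lie in $\mathbb{F}_p$; this is what it means for $C_1$ to be defined over $\mathbb{F}_p$. For every $a \in \mathbb{F}_p$ we have $a^p = a$, hence by iteration $a^{q} = a^{p^r} = a$. Therefore $f^{(q)}$, the polynomial obtained from $f$ by raising each coefficient to the $q$-th power, equals $f$, so $C_1^{(q)} = C_1$ by the very definition of $C_1^{(q)}$. Substituting this into the factorization above shows that $\lambda$ is a separable map $C_1 \longrightarrow C_2$, which is exactly the assertion of the corollary.

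As the surrounding text already indicates, this really is immediate once Theorem \ref{factor-through-frob} is available: the only input beyond that theorem is the elementary fact that the $p$-power map is the identity on $\mathbb{F}_p$. There is essentially no obstacle; the only points requiring a word of care are (i) confirming that the $q$ furnished by Theorem \ref{factor-through-frob} is indeed a power of $p$ (it is, being the inseparable degree of $\psi$), so that it fixes $\mathbb{F}_p$ pointwise, and (ii) disposing of the degenerate constant-map case separately, which is trivial.
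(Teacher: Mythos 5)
Your proof is correct and follows exactly the paper's own (one-line) justification: apply Theorem \ref{factor-through-frob} and note that $C_1^{(q)}=C_1$ because the defining coefficients lie in $\mathbb{F}_p$ and are therefore fixed by the $q$-th power map. The extra remarks about the constant case and about $q$ being a $p$-power are harmless elaborations of the same argument.
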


\section{Divisibility of L-polynomials}

In this section we prove our result on the divisibility relation between L-polynomials.
Again $q=p^a$ where $p$ is a prime.
After proving the theorem we will then prove a partial converse.

\subsection{Divisibility Theorem}

We begin with one preliminary lemma.

\begin{lemma}\label{radical-subrings}
Suppose that $f(x),g_1(x)$ and $g_2(x)$ are polynomials in $\Z[x]$ where $f(0)\neq 0$ and $f(x)^n=g_1(x^k)/g_2(x^k)$ for positive integers $k$ and $n$. Then there exists a polynomial $h(x)$ in $\Z[x]$ so that $f(x)=h(x^k)$. 
\end{lemma}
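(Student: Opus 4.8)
The plan is to exploit the symmetry in $x$ that the hypothesis imposes on $f$. Clearing denominators (note $g_2\neq 0$ since $g_1(x^k)/g_2(x^k)$ is meaningful), the hypothesis becomes the polynomial identity $f(x)^n\,g_2(x^k)=g_1(x^k)$ in $\Z[x]$. Let $\zeta\in\C$ be a primitive $k$-th root of unity and substitute $x\mapsto\zeta x$. Since $(\zeta x)^k=x^k$, both $g_1(x^k)$ and $g_2(x^k)$ are unchanged, so $f(\zeta x)^n\,g_2(x^k)=g_1(x^k)=f(x)^n\,g_2(x^k)$; cancelling the nonzero factor $g_2(x^k)$ in the integral domain $\C[x]$ yields $f(\zeta x)^n=f(x)^n$.

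Next I would upgrade this to $f(\zeta x)=f(x)$. Set $u=f(\zeta x)/f(x)\in\C(x)^\times$; then $u^n=1$, so $u$ is a root of unity, hence, being algebraic over the algebraically closed field $\C$, a constant $\eta\in\C$ with $\eta^n=1$. (Equivalently, unique factorization in $\C[x]$ forces $f(\zeta x)$ and $f(x)$ to have the same irreducible factors with the same multiplicities, so they differ by a nonzero constant $\eta$.) Evaluating $f(\zeta x)=\eta f(x)$ at $x=0$ gives $f(0)=\eta f(0)$, and since $f(0)\neq 0$ this forces $\eta=1$, i.e. $f(\zeta x)=f(x)$.

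It remains to read off the conclusion. Writing $f(x)=\sum_i a_i x^i$ with $a_i\in\Z$, the identity $f(\zeta x)=f(x)$ says $a_i(\zeta^i-1)=0$ for every $i$, so $a_i=0$ whenever $k\nmid i$. Hence $f(x)=\sum_j a_{jk}x^{jk}=h(x^k)$, where $h(x):=\sum_j a_{jk}x^j\in\Z[x]$, which is exactly what we want.

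The one point that needs care — the ``hard part'' — is the passage from $f(\zeta x)^n=f(x)^n$ to $f(\zeta x)=f(x)$: taking $n$-th roots in $\C[x]$ is only determined up to a constant $n$-th root of unity, and this ambiguity must be eliminated. This is precisely where the hypothesis $f(0)\neq 0$ is used, and it is genuinely needed: for $f(x)=x$ with $n=k=2$ one has $f(x)^2=x^2$, a polynomial in $x^2$, yet $f$ itself is not.
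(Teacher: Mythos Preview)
Your proof is correct and follows essentially the same route as the paper: substitute $x\mapsto\zeta_k x$ to obtain $f(\zeta_k x)^n=f(x)^n$, deduce $f(\zeta_k x)=\eta f(x)$ for an $n$-th root of unity $\eta$, and use $f(0)\neq 0$ to force $\eta=1$. Your write-up is in fact more detailed than the paper's (you spell out the passage to coefficients and include the $f(x)=x$ counterexample showing $f(0)\neq 0$ is needed), but the argument is the same.
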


\begin{proof}
Let $\zeta_k$ be a primitive $k$-th root of unity. Then 
$$
f(\zeta_kx)^n=g_1((\zeta_kx)^k)/g_2((\zeta_kx)^k)=g_1(x^k)/g_2(x^k)=f(x)^n.
$$
Thus $f(\zeta_kx)=\zeta_n f(x)$ for some $n$-th root of unity $\zeta_n$. Since $f(0)\neq 0$ we have $\zeta_n=1$ and $f(\zeta_kx)=f(x)$. Now the claim follows.
\end{proof}

We restate Theorem \ref{main} and give the proof.

\begin{thm}
Let $C$ and $D$ be two smooth projective curves over $\F_q$.
Assume there exists a positive integer $k$ such that 
\begin{enumerate}
\item $\#C(\F_{q^m})=\#D(\F_{q^m})$ for every $m$ that is not divisible by $k$, and
\item the $k$-th powers of the roots of $L_C(t)$ are all distinct. 
\end{enumerate}
Then  $L_D(t)=q(t^k)L_C(t)$ for some polynomial $q(t)$ in $\Z[t]$. 
\end{thm}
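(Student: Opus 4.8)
The plan is to work with Frobenius eigenvalues and use the point-count hypothesis to force an equality of "generating functions" at the level of $k$-th powers, then descend via Lemma~\ref{radical-subrings}. Write $L_C(t)=\prod_{i=1}^{2g_C}(1-\alpha_i t)$ and $L_D(t)=\prod_{j=1}^{2g_D}(1-\beta_j t)$. Hypothesis~(1) says $\#C(\F_{q^m})=\#D(\F_{q^m})$ for all $m$ with $k\nmid m$, which by the formula $\#C(\F_{q^m})=q^m+1-\sum\alpha_i^m$ translates into $\sum_i \alpha_i^m=\sum_j \beta_j^m$ for every $m$ not divisible by $k$. First I would massage this using the standard trick of averaging over multiplication by $k$-th roots of unity: for a primitive $k$-th root of unity $\zeta$ and any $m\geq 1$,
\[
\frac{1}{k}\sum_{\ell=0}^{k-1}\Bigl(\sum_i(\zeta^\ell\alpha_i)^m - \sum_j(\zeta^\ell\beta_j)^m\Bigr)
= \begin{cases} \sum_i\alpha_i^m-\sum_j\beta_j^m & \text{if }k\mid m,\\ 0 & \text{if }k\nmid m,\end{cases}
\]
because the inner sum $\sum_\ell \zeta^{\ell m}$ vanishes unless $k\mid m$. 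So if I replace each multiset $\{\alpha_i\}$ by its "symmetrization" $S_C=\{\zeta^\ell\alpha_i\}_{\ell,i}$ (and similarly $S_D$), the power sums of $S_C$ and $S_D$ agree for every $m$ not divisible by $k$, and I have removed the exceptional arithmetic progression entirely from the remaining discrepancy.

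Next I would encode this in $L$-polynomials. The symmetrized products are exactly $\prod_{\ell,i}(1-\zeta^\ell\alpha_i t)=\prod_i(1-\alpha_i^k t^k)=L_C^{(k)}(t^k)$ where $L_C^{(k)}$ is the $L$-polynomial of $C$ over $\F_{q^k}$, and likewise $\prod_{\ell,j}(1-\zeta^\ell\beta_j t)=L_D^{(k)}(t^k)$. The relation on power sums for all $m$ not divisible by $k$ says that the rational functions $L_D(t)\cdot L_C^{(k)}(t^k)$ and $L_C(t)\cdot L_D^{(k)}(t^k)$ have the same logarithmic derivative up to terms supported on exponents divisible by $k$; more cleanly, working with $\frac{d}{dt}\log$ of the quotient $\frac{L_D(t)}{L_C(t)}$ and the quotient $\frac{L_C^{(k)}(t^k)}{L_D^{(k)}(t^k)}$, their difference is a power series in $t^k$. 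Integrating and exponentiating, I get that $\frac{L_D(t)}{L_C(t)}$ equals a power series in $t^k$ times $\frac{L_D^{(k)}(t^k)}{L_C^{(k)}(t^k)}$, which is itself a rational function in $t^k$; hence $\frac{L_D(t)}{L_C(t)}$ is a rational function in $t^k$, say $L_D(t)/L_C(t)=G_1(t^k)/G_2(t^k)$ with $G_1,G_2\in\Q[t]$ (one can arrange integer coefficients after clearing denominators). This is precisely the hypothesis of Lemma~\ref{radical-subrings} with $f=L_D/L_C$ raised to the first power — but here I must be careful: $L_D/L_C$ need not be a polynomial a priori, so instead I apply the lemma to $f=$ (a suitable integer polynomial built from $L_C,L_D$). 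Concretely, hypothesis~(2) — distinctness of the $k$-th powers $\alpha_i^k$ — guarantees that $\gcd$-type cancellation in $L_C^{(k)}(t^k)$ is controlled, and it is exactly what lets me conclude $L_C(t)\mid L_D(t)$ in $\Z[t]$: since $L_D(t)/L_C(t)$ is a rational function that is a ratio of polynomials in $t^k$ and $L_C$ has distinct roots after raising to the $k$-th power, each factor $(1-\alpha_i t)$ of $L_C$ must divide $L_D$. Then $L_D(t)=q(t)L_C(t)$ for some $q\in\Z[t]$ (integrality from Gauss's lemma, since $L_C$ is primitive), and feeding $q(t)=L_D/L_C = (\text{poly in }t^k)$ back through Lemma~\ref{radical-subrings} with $f=q$, $n=1$ shows $q(t)=q(t^k)$-type form, i.e. $q$ is a polynomial in $t^k$, giving the claimed $L_D(t)=q(t^k)L_C(t)$.

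The main obstacle I anticipate is the clean justification that $L_D(t)/L_C(t)$ is actually a polynomial — i.e. that $L_C \mid L_D$ — rather than merely a rational function in $t^k$; this is where hypothesis~(2) is indispensable and must be used with care. The danger is that a root $\alpha_i$ of $L_C$ could fail to be a root of $L_D$ while $\alpha_i^k$ still cancels against something in the symmetrized identity; distinctness of the $\alpha_i^k$ rules this out because it prevents the symmetrization map $\alpha\mapsto\alpha^k$ from collapsing distinct Frobenius eigenvalues of $C$, so the multiset $S_C$ literally injects (up to the $\zeta$-orbit) into $S_D$. I would make this precise by comparing $L_C^{(k)}$ and $L_D^{(k)}$ directly: the power-sum identity for $m$ divisible by $k$ is \emph{not} assumed, so $L_C^{(k)}$ need not divide $L_D^{(k)}$, but the symmetrized identity I derived holds for \emph{all} $m$ after the averaging (the $k\mid m$ terms contribute to both sides equally is false — rather, they are the only place discrepancy lives), so I should instead argue at the level of $t$, not $t^k$: the identity $\sum_i\alpha_i^m=\sum_j\beta_j^m$ for $k\nmid m$ together with distinctness of $\{\alpha_i^k\}$ forces, by a Vandermonde/linear-independence-of-characters argument applied within each residue class mod $k$, that the $\alpha_i$ all appear among the $\beta_j$ with at least equal multiplicity. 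Once that containment of multisets is established, $L_C(t)\mid L_D(t)$ in $\overline\Q[t]$ hence in $\Z[t]$, the quotient $q(t)$ lies in $\Z[t]$, and $q(t^k)$-periodicity follows from Lemma~\ref{radical-subrings}. The rest is bookkeeping.
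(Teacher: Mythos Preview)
Your overall strategy is right and close to the paper's, but the step you flag as the ``main obstacle''---deducing $L_C(t)\mid L_D(t)$---is not actually resolved by what you wrote; the appeal to a ``Vandermonde/linear-independence-of-characters argument within each residue class mod $k$'' is not the right mechanism and is left as a gesture. The clean fix uses exactly what you already established. Since $\log(L_D/L_C)=\sum_{k\mid m}\bigl(\sum_i\alpha_i^m-\sum_j\beta_j^m\bigr)\,t^m/m$ lies in $t^k\Q[[t^k]]$, the rational function $L_D/L_C$ is invariant under $t\mapsto\zeta t$ for every $k$-th root of unity $\zeta$, so in lowest terms it equals $P(t^k)/Q(t^k)$. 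Any pole of $L_D/L_C$ is then a zero of $Q(t^k)$, and hence its entire orbit under multiplication by $k$-th roots of unity consists of poles as well; but the poles of $L_D/L_C$ lie among the (simple) zeros of $L_C$, i.e.\ in $\{\alpha_i^{-1}\}$. Two distinct elements of this set in the same such orbit would give $\alpha_i^k=\alpha_j^k$ with $i\neq j$, contradicting hypothesis~(2). Hence there are no poles, $L_C\mid L_D$ in $\Z[t]$, and the quotient $q(t)=L_D/L_C$ is a polynomial which is already a power series in $t^k$, so it is a polynomial in $t^k$. With this argument Lemma~\ref{radical-subrings} is not even needed.

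For comparison, the paper takes a slightly different route to $L_C\mid L_D$: it raises the power-series identity to the $k$-th power to get the polynomial equality $L_C(t)^kL_D^{(k)}(t^k)=L_D(t)^kL_C^{(k)}(t^k)$, then factors $L_C^{(k)}(t^k)=L_C(t)\cdot\prod_i\prod_{\zeta^k=1,\,\zeta\neq 1}(1-\zeta\alpha_i t)$. Hypothesis~(2) makes $L_C$ coprime to the second factor and also squarefree, so $L_C^{k-1}\mid L_D^k$ forces $L_C\mid L_D$; Lemma~\ref{radical-subrings} is then applied to $p(t)^k=L_D^{(k)}(t^k)/L_C^{(k)}(t^k)$ to conclude that the quotient is a polynomial in $t^k$. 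Both arguments use hypothesis~(2) in exactly the same way (no two $\alpha_i$ differ by a $k$-th root of unity); yours is a bit more direct once the pole-orbit step above is written down, while the paper's has the minor advantage of staying with polynomial identities throughout.
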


\begin{proof}
Let $L_C(t)=\prod_{i=1}^{2g(C)}(1-\alpha_it)$ and $L_D(t)=\prod_{j=1}^{2g(d)}(1-\beta_jt)$, then 
$$
\#C(\Fqm)=1+q^m-\sum_{i=1}^{2g(C)}\alpha_i^m
$$
and
$$
\#D(\Fqm)=1+q^m-\sum_{j=1}^{2g(D)}\beta_j^m.
$$
So, by hypothesis, there exists a positive integer $k$ such that
$$
\sum_{i=1}^{2g(C)}\alpha_i^m=\sum_{j=1}^{2g(D)}\beta_j^m
$$
for every $m$ with $k\nmid m$. This gives an equality of certain 
zeta functions, namely
$$
\exp\left(\sum_{m:k\nmid m}\sum_{i=1}^{2g(C)}\alpha_i^m\frac{t^m}{m}\right)=\exp\left(\sum_{m:k\nmid m}\sum_{j=1}^{2g(D)}\beta_j^m\frac{t^m}{m}\right)
$$

Now
$$
\exp\left(\sum_{m:k\nmid m}\sum_{i=1}^{2g(C)}\alpha_i^m\frac{t^m}{m}\right)=\exp\left(\sum_{m}\sum_{i=1}^{2g(C)}\alpha_i^m\frac{t^m}{m}-\sum_{m:k | m}\sum_{i=1}^{2g(C)}\alpha_i^m\frac{t^m}{m}\right)=
$$
$$
=\exp\left(\sum_{m}\sum_{i=1}^{2g(C)}\alpha_i^m\frac{t^m}{m}-\sum_{m}\sum_{i=1}^{2g(C)}\alpha_i^{km}\frac{t^{km}}{km}\right)=\frac{\prod_{i=1}^{2g(C)}(1-\alpha_i^kt^k)^{1/k}}{\prod_{i=1}^{2g(C)}(1-\alpha_it)}=
$$
$$
=\frac{L_C^{(k)}(t^k)^{1/k}}{L_C(t)}
$$
Therefore,
$$
\frac{L_C^{(k)}(t^k)^{1/k}}{L_C(t)}=\frac{L_D^{(k)}(t^k)^{1/k}}{L_D(t)}
$$
and, raising to the $k$-th power, we get a polynomial equality
\begin{equation}\label{eqn-L_D,L_C}
L_C(t)^kL_D^{(k)}(t^k)=L_D(t)^kL_C^{(k)}(t^k)
\end{equation}
In particular, $L_C(t)^k$ divides $L_D(t)^kL_C^{(k)}(t^k)$. But
$$
L_C^{(k)}(t^k)=\prod_{i=1}^{2g(C)}(1-(\alpha_it)^k)=\prod_{i=1}^{2g(C)}\prod_{\zeta^k=1}(1-\zeta\alpha_i t)
$$
$$=L_C(t)\prod_{i=1}^{2g(C)}\prod_{\zeta^k=1,\zeta\neq 1}(1-\zeta\alpha_i t)
$$
so $L_C(t)^{k-1}$ divides $L_D(t)^k\prod_{i=1}^{2g(C)}\prod_{\zeta^k=1,\zeta\neq 1}(1-\zeta\alpha_i t)$. Since $L_C(t)$ and this last product are relatively prime by assumption, we conclude that $L_C(t)^{k-1}$ divides $L_D(t)^k$. Since $L_C(t)$ is square free, it must divide $L_D(t)$.

Having proved the divisibility of $L_D(t)$ by $L_C(t)$ we need to prove their quotient is of desired form.  Using~\eqref{eqn-L_D,L_C} and writing $L_D(t)=p(t)L_C(t)$, we have
$$
p(t)^k=L_D^{(k)}(t^k)/L_C^{(k)}(t^k).
$$
Now the claim follows from Lemma~\ref{radical-subrings} and there exists $q(t)$ in $\Z[t]$ so that $L_D(t)=q(t^k)L_C(t)$.
\end{proof}
We remark that the theorem becomes false when we replace the first hypothesis
``for every $m$ that is not divisible by $k$''
with
``for every $m$ with $\gcd(m,k)=1$.''
A counterexample is given by the curves (defined over $\F_3$)
$D:y^2+(x^2+x+1)y=x^5+x^4+x^2+x+1$ 
which has L-polynomial
$9t^4+3t^3-2t^2+t + 1$
and the curve
$C:y^2+(2x+1)y=x^3+2x^2+2$
which has L-polynomial
$3t^2 + t + 1$. The curve $C$ is an ordinary curve and it follows from Lemma~8 of \cite{AhShp} that $L_C^{(n)}(t)$ is an irreducible polynomial for every $n$ and hence it has distinct roots. Furthermore,
the curves $C$ and $D$ have the same number of rational points over $\F_{3^m}$
when $\gcd (m,6)=1$ but there is no divisibility of L-polynomials.
More generally, for a suitable $a$ a curve with L-polynomial 
$L_1: qt^2-at+1$ and a curve with L-polynomial $L_2: q^2t^4-aqt^3+(a^2-q)t^2-at+1$ have
the same number of rational points over $\F_{q^m}$
when $\gcd (m,6)=1$ but there is no divisibility of L-polynomials. The existence of curves of genus one with L-polynomial equal to $L_1$ for some $a$ and curves of genus two with L-polynomial equal to $L_2$ is guaranteed by the results on the classification of Weil polynomials of degree two and four~\cite{Deuring, Ruck}.

\subsection{A Converse}

We have the following theorem which is a partial converse of the theorem above.

\begin{thm}\label{converse}
Let $C$ and $D$ be two smooth projective curves over $\F_q$.
Assume that there exists a positive integer $k>1$ such that 
 $L_D(t)=q(t^k)L_C(t)$ for some polynomial $q(t)$ in $\Z[t]$. Then
 $\#C(\F_{q^m})=\#D(\F_{q^m})$ for every $m$ that is not divisible by $k$.
\end{thm}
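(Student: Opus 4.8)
The plan is to compare the two point counts directly through the reciprocal roots of the L-polynomials. Write $L_C(t)=\prod_{i=1}^{2g(C)}(1-\alpha_i t)$ and $L_D(t)=\prod_{j=1}^{2g(D)}(1-\beta_j t)$ as in Section \ref{lprelim}, so that $\#C(\F_{q^m})=q^m+1-\sum_i\alpha_i^m$ and $\#D(\F_{q^m})=q^m+1-\sum_j\beta_j^m$. Subtracting gives $\#C(\F_{q^m})-\#D(\F_{q^m})=\sum_j\beta_j^m-\sum_i\alpha_i^m$. Since $L_D(t)=q(t^k)L_C(t)$ is a factorization in $\Z[t]$ and $q(0)=L_D(0)/L_C(0)=1$, the polynomial $q(t^k)$ has nonzero constant term, so we may write $q(t^k)=\prod_\rho(1-\rho t)$; then the multiset $\{\beta_j\}$ is the disjoint union of $\{\alpha_i\}$ with the reciprocal roots $\rho$ of $q(t^k)$ (counted with multiplicity). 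Hence $\#C(\F_{q^m})-\#D(\F_{q^m})=\sum_\rho\rho^m$, and everything reduces to showing that the $m$-th power sum of the reciprocal roots of $q(t^k)$ vanishes whenever $k\nmid m$.

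For that step, factor $q(t)=\prod_l(1-\gamma_l t)$ over a splitting field; then $q(t^k)=\prod_l(1-\gamma_l t^k)=\prod_l\prod_{\zeta^k=1}(1-\zeta\delta_l t)$, where $\delta_l$ is a fixed $k$-th root of $\gamma_l$. Therefore the $m$-th power sum of the reciprocal roots of $q(t^k)$ equals $\sum_l\delta_l^m\sum_{\zeta^k=1}\zeta^m$, and the inner sum $\sum_{\zeta^k=1}\zeta^m$ is $0$ unless $k\mid m$, which finishes the proof. Alternatively, one can avoid extracting $k$-th roots by using the identity $-tP'(t)/P(t)=\sum_{m\geq 1}\bigl(\sum_\rho\rho^m\bigr)t^m$ valid for $P(t)=\prod_\rho(1-\rho t)$: for $P(t)=q(t^k)$ the left-hand side equals $k\cdot\bigl(-u\,q'(u)/q(u)\bigr)\big|_{u=t^k}$, which is a power series in $t^k$, so every coefficient of $t^m$ with $k\nmid m$ is zero.

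There is no real obstacle here; the only point needing a moment's care is the observation that $q(0)=1$, which legitimizes the ``reciprocal root'' bookkeeping for $q(t^k)$. It is also worth recording the consistency check that when $k\mid m$ the difference $\#C(\F_{q^m})-\#D(\F_{q^m})$ equals $k$ times the $(m/k)$-th power sum of the reciprocal roots of $q$, exactly as predicted by the generating-function computation in the proof of Theorem \ref{main}.
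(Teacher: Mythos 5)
Your proof is correct and is essentially the same as the paper's: the paper takes logarithms of $L_D(t)=q(t^k)L_C(t)$ and observes that $\log q(t^k)$ is a power series in $t^k$, hence contributes nothing to the coefficient of $t^m$ (which encodes $\#C(\F_{q^m})-\#D(\F_{q^m})$) when $k\nmid m$. Your primary argument merely unpacks this into an explicit power-sum computation with the roots-of-unity filter $\sum_{\zeta^k=1}\zeta^m=0$ for $k\nmid m$, and your stated alternative via the logarithmic derivative is precisely the paper's argument.
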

\begin{proof}
Since
\[
  \frac{L_C(t)}{(1-t)(1-qt)}=Z_C(t)=exp \biggl( \sum_{m\geq 1} \#C(\F_{q^m})  \frac{t^m}{m} \biggr),
\]
it follows that  
\[
\log L_C(t)=\sum_{m\geq 1}( \#C(\F_{q^m})-1-q^m)  \frac{t^m}{m}
\]
in $\mathbb{Q}[[t]]$. Similarly, we have
\[
\log L_D(t)=\sum_{m\geq 1}( \#D(\F_{q^m})-1-q^m)  \frac{t^m}{m}.
\]
Now from $L_D(t)=q(t^k)L_C(t)$ we have
$\log L_D(t)=\log q(t^k)+\log L_C(t)$ in $\mathbb{Q}[[t]]$, and since $\log q(t^k)$ is a power series in $t^k$, the coefficients of $t^m$ in $\log L_C(t)$ and $\log L_D(t)$ are equal for $m$ not a multiple of $k$, and so are $\#C(F_{q^m})=\#D(F_{q^m})$ whenever $m$ is not a multiple of $k$.
\end{proof}

\section{Application to Curves}

We present a simple family of curves where our theorem 
(Theorem \ref{main}) applies, modulo a conjecture,
and yet the the Kani-Rosen and Kleiman-Serre theorems do not apply.
Our family is a subfamily of a family considered by Poonen~\cite{Poonen}.

We let $D_k$ be the hyperelliptic curve defined by the affine equation
\[
D_k: y^2+y=x^{2^k+1}+x^{-1}
\]
 over $\F_2$. 
We make the following conjecture, where the L-polynomials stated are over $\F_2$.

\begin{conjecture}\label{conjdk}
The L-polynomial of $D_k$ is divisible by the L-polynomial of $D_1$.
\end{conjecture}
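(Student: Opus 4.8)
The idea is to obtain Conjecture~\ref{conjdk} as an instance of Theorem~\ref{main} applied to $C=D_1$ and $D=D_k$; this is the natural tool here because, as noted above, neither the Kani--Rosen theorem (Theorem~\ref{kr}) nor the Kleiman--Serre theorem (Theorem~\ref{KleimanSerre}) applies to this family. First I would pin down $L_{D_1}(t)$. The curve $D_1\colon y^2+y=x^3+x^{-1}$ is an Artin--Schreier double cover of $\mathbb{P}^1$, totally ramified exactly over $x=0$ and $x=\infty$, where $x^3+x^{-1}$ has poles of odd orders $1$ and $3$; the genus formula for Artin--Schreier covers then gives $g(D_1)=2$. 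Over $\mathbb{F}_{2^m}$ each $x\in\mathbb{F}_{2^m}\setminus\{0\}$ contributes $1+(-1)^{\mathrm{Tr}(x^3+x^{-1})}$ affine points, and there is one $\mathbb{F}_2$-rational point above each of $x=0$ and $x=\infty$; this gives $\#D_1(\mathbb{F}_2)=4$, $\#D_1(\mathbb{F}_4)=4$, $\#D_1(\mathbb{F}_8)=16$, whence
\[
L_{D_1}(t)=4t^4+2t^3+t+1 .
\]
One checks that this quartic is irreducible over $\mathbb{Q}$; moreover the $2$-adic valuations of its reciprocal roots are $0,\tfrac12,\tfrac12,1$, so the only pair whose ratio could be a root of unity is the pair of eigenvalues of valuation $\tfrac12$, and a short computation (e.g.\ comparing the first few power sums, or using that a degree-$4$ eigenvalue cannot be $\sqrt2$ times a small root of unity) rules that out. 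Hence the $\kappa$-th powers of the roots of $L_{D_1}$ are distinct for every $\kappa$, so hypothesis~(2) of Theorem~\ref{main} holds for any choice of $\kappa$.

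With this done, everything reduces to hypothesis~(1). Take $\kappa=k$ (when $k$ is prime the degrees are compatible, since $\deg q=(2^k-2)/k\in\mathbb{Z}$; for composite $k$ one chains finitely many such steps through intermediate curves $D_j$). Using
\[
\#D_k(\mathbb{F}_{2^m})=2^m+1-\sum_{x\in\mathbb{F}_{2^m}\setminus\{0\}}(-1)^{\mathrm{Tr}(x^{2^k+1}+x^{-1})},
\]
hypothesis~(1) --- that $\#D_1(\mathbb{F}_{2^m})=\#D_k(\mathbb{F}_{2^m})$ for every $m$ with $k\nmid m$ --- is precisely the exponential-sum identity
\[
\sum_{x\in\mathbb{F}_{2^m}\setminus\{0\}}(-1)^{\mathrm{Tr}(x^{3}+x^{-1})}=\sum_{x\in\mathbb{F}_{2^m}\setminus\{0\}}(-1)^{\mathrm{Tr}(x^{2^k+1}+x^{-1})}\qquad(k\nmid m),
\]
which is the exponential-sums conjecture of Section~5. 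Granting it, Theorem~\ref{main} yields $q\in\mathbb{Z}[t]$ with $L_{D_k}(t)=q(t^k)L_{D_1}(t)$, so in particular $L_{D_1}(t)\mid L_{D_k}(t)$; and by Theorem~\ref{converse} the implication reverses, so Conjecture~\ref{conjdk} (in this sharpened form) is in fact equivalent to the exponential-sums conjecture.

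The hard step is this last identity, and I would not expect a soft proof. The two Gold-type sums with exponents $3$ and $2^k+1$ have no apparent functional relationship --- which is exactly why no morphism or shared automorphism group exists --- so there is no substitution carrying one to the other. Over an odd-degree extension one has $\gcd(2^k+1,2^m-1)=1$, so $x\mapsto x^{2^k+1}$ permutes $\mathbb{F}_{2^m}\setminus\{0\}$ and the sum rewrites as a twisted Kloosterman sum $\sum_x(-1)^{\mathrm{Tr}(x+x^{-e})}$ with $e(2^k+1)\equiv1\pmod{2^m-1}$; comparing these sums for the two exponents, or comparing the $\ell$-adic sheaves attached to the two Artin--Schreier covers together with their geometric monodromy, seems the most plausible line of attack. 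But establishing the identity for all $m$ not divisible by $\kappa$ is the real obstacle, and this is why the statement is left as a conjecture.
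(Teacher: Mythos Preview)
The statement is a \emph{conjecture}; the paper does not prove it. What the paper does is verify it computationally for $k\le 5$ and, in Section~6, show that it is implied by the exponential-sums Conjecture~\ref{Helleseth-conj} (Theorems~\ref{jacdiv1} and~\ref{12to6}). Your proposal is likewise not a proof but a reduction to that same open conjecture, and you say so explicitly at the end. In that sense your outline is aligned with what the paper actually establishes.

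That said, two points in your reduction are looser than the paper's treatment.

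\textbf{Hypothesis~(1) for composite $k$.} You take $\kappa=k$ and assert that the requirement $\#D_1(\F_{2^m})=\#D_k(\F_{2^m})$ for all $m$ with $k\nmid m$ ``is precisely the exponential-sums conjecture of Section~5.'' It is not: Conjecture~\ref{Helleseth-conj} only asserts $G_m^{(k)}=G_m^{(1)}$ for $\gcd(k,m)=1$, which for composite $k$ is strictly weaker than $k\nmid m$. The paper even exhibits (just after Theorem~\ref{main}) a counterexample showing that Theorem~\ref{main} fails if ``$k\nmid m$'' is relaxed to ``$\gcd(k,m)=1$.'' For prime powers the paper takes $\kappa=p$ (not $\kappa=p^a$), so that the two conditions coincide; for general $k$ it does \emph{not} simply ``chain through intermediate $D_j$'' as you suggest, but runs an induction on the number of prime factors together with a $p$-rank argument (Lemma~\ref{maninapp} plus Lemma~\ref{prankD}) to kill the unwanted factor. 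Your one-line gesture at chaining does not supply this.

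\textbf{Hypothesis~(2).} Your Newton-polygon/valuation argument is reasonable, but the step ``a short computation rules that out'' for the two slope-$\tfrac12$ eigenvalues is left to the reader. The paper's route is cleaner and uniform in $\kappa$: $L_{D_1}(t)=4t^4+2t^3+t+1$ is irreducible, so $\mathrm{Jac}(D_1)$ is simple; since its $2$-rank is~$1$, Maisner--Nart gives absolute simplicity, whence $L_{D_1}^{(r)}$ is irreducible for every $r$ and the $r$-th powers of the roots are automatically distinct.

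Finally, your closing ``equivalence'' overshoots: Theorem~\ref{converse} applied to $L_{D_k}=q(t^k)L_{D_1}$ returns equal point counts for all $m$ with $k\nmid m$, which for composite $k$ is \emph{more} than Conjecture~\ref{Helleseth-conj}. The paper's equivalence statements go through the refined Conjecture~\ref{conjdk2} (Theorem~\ref{7to12}) rather than through Conjecture~\ref{conjdk} directly.
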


In fact we will also make a more refined conjecture.

\begin{conjecture}\label{conjdk2}
Let $k=p_1^{a_1}\cdots p_m^{a_m}$ be the prime factorization of $k$,
where $p_1,\ldots,p_m$ are distinct primes.
Then 
\[
L_{D_k}(t)=q_1(t^{p_1})\cdots q_m (t^{p_m})L_{D_1}(t)
\]
for some polynomials $q_i(t)$ in $\Z[t]$. 
\end{conjecture}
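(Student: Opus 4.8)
The plan is to \emph{derive Conjecture~\ref{conjdk2} from the exponential sums conjecture of Section~5} by applying Theorem~\ref{main} repeatedly, stripping off one prime factor of $k$ at a time. The input I would use is the point-counting incarnation of that conjecture, namely that $\#D_k(\F_{2^m})$ depends only on $\gcd(k,m)$. Granting this, I would induct on $\Omega(k):=a_1+\cdots+a_m$, the number of prime divisors of $k$ counted with multiplicity; the base case $k=1$ is vacuous. For $k>1$, fix a prime $p\mid k$ and put $k'=k/p$.

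The elementary point that makes the induction go is that $\gcd(k,m)=\gcd(k',m)$ for every $m$ \emph{not divisible by $p$}: writing $k=p^{a}k_0$ with $p\nmid k_0$, if $p\nmid m$ then both gcds equal $\gcd(k_0,m)$. Combined with the assumed point-count statement this gives
\[
\#D_{k'}(\F_{2^m})=\#D_k(\F_{2^m})\qquad\text{for every }m\text{ with }p\nmid m,
\]
which is precisely hypothesis~(1) of Theorem~\ref{main} applied with $C=D_{k'}$, $D=D_k$, and the integer $k$ appearing in that theorem taken to be $p$. If hypothesis~(2) also holds at this stage — the $p$-th powers of the roots of $L_{D_{k'}}(t)$ are pairwise distinct — Theorem~\ref{main} outputs $L_{D_k}(t)=\tilde q(t^{p})\,L_{D_{k'}}(t)$ for some $\tilde q\in\Z[t]$.

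To close the induction, the inductive hypothesis gives $L_{D_{k'}}(t)=\bigl(\prod_i \hat q_i(t^{p_i})\bigr)L_{D_1}(t)$, the product ranging over the distinct primes dividing $k'$; these are the primes dividing $k$, with $p$ dropped exactly when its multiplicity in $k$ was $1$. Multiplying through by $\tilde q(t^{p})$ and absorbing it into the factor in $t^{p}$ when that prime already occurs (otherwise letting $\tilde q(t^{p})$ serve as the new such factor) — legitimate because a product of two polynomials in a variable $u$ is again a polynomial in $u$, here $u=t^{p}$ — yields an identity of the shape $L_{D_k}(t)=\prod_{i=1}^{m}q_i(t^{p_i})\,L_{D_1}(t)$, which is Conjecture~\ref{conjdk2}. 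Conjecture~\ref{conjdk} follows immediately, since this displays $L_{D_1}(t)$ as a factor of $L_{D_k}(t)$.

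The main obstacle is hypothesis~(2) of Theorem~\ref{main}, which must be checked for every curve $D_d$ ($d\mid k$) appearing in the chain, against the relevant prime $p\mid(k/d)$: one needs $L_{D_d}(t)$ squarefree and, moreover, no ratio $\alpha_i/\alpha_j$ of distinct Frobenius eigenvalues of $D_d$ to be a $p$-th root of unity. Over $\F_2$ the dangerous case is the presence of a supersingular factor, whose eigenvalues are $\sqrt2$ times roots of unity and whose $p$-th powers can collide; ruling this out would require either showing the relevant $D_d$ are ordinary (in the spirit of the genus-one computation via Lemma~8 of \cite{AhShp} used for the counterexample above) or a direct analysis of the Newton polygons and L-polynomials of the $D_d$. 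And of course the exponential sums conjecture is itself open, so unconditionally this route establishes only that Conjecture~\ref{conjdk2} is \emph{equivalent} to it, not the conjecture outright.
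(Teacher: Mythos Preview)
Your inductive scheme has a concrete failure, not just a technical worry. Take $k=4$: your step goes from $D_4$ to $D_{k'}=D_2$ via $p=2$, so you must verify that the \emph{squares} of the roots of $L_{D_2}(t)=(4t^4+2t^3+t+1)(2t^2+1)$ are pairwise distinct. But the factor $2t^2+1$ has roots $\pm i/\sqrt2$, whose squares coincide. Hence hypothesis~(2) of Theorem~\ref{main} fails for $C=D_2$ and the prime $2$, and the induction breaks already here. More generally, by Lemma~\ref{prankD} every $D_d$ with $d>1$ has $2$-rank~$1$ while its genus is $2^{d-1}+1>1$, so its Jacobian is far from ordinary and there is no reason to expect hypothesis~(2) to hold for it.

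The paper circumvents this by never invoking Theorem~\ref{main} with $C=D_{k'}$ as a black box. Instead it reopens the proof to extract the identity $L_{D_k}(t)^{p}L_{D_{k'}}^{(p)}(t^{p})=L_{D_{k'}}(t)^{p}L_{D_k}^{(p)}(t^{p})$ (which uses only hypothesis~(1)), writes $L_{D_{k'}}=L_{D_1}\cdot P$ by induction, and notes that $P$ must have $2$-rank~$0$ since both $D_{k'}$ and $D_1$ have $2$-rank~$1$. Lemma~\ref{maninapp} then forbids $L_{D_1}$ from dividing any factor built out of $P$, so the distinct-$p$th-powers condition is required only for $L_{D_1}$ itself, and that is verified via \cite{maisner-nart}. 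This gives $L_{D_1}\mid L_{D_k}$ (Conjecture~\ref{conjdk}) for all $k$, but the full product shape of Conjecture~\ref{conjdk2} is obtained only when $k$ has at most two prime factors, using the additional Lemmas~\ref{radical-subring-2} and~\ref{rattopoly}; for general $k$ the paper gets only a rational-function analogue. In particular, the exponential-sums conjecture is \emph{not} known to be equivalent to Conjecture~\ref{conjdk2}, contrary to your closing sentence.
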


We verified the conjectures for $k\leq 5$ using MAGMA \cite{MR1484478}.
The first five L-polynomials are

\bigskip

$
D_1: 4t^4 + 2t^3 + t + 1
$

$
D_2: (4t^4 + 2t^3 + t + 1)(2t^2 + 1)
$

$
D_3: (4t^4 + 2t^3 + t + 1)(8t^6 - 4t^3 + 1)
$

$
D_4: (4t^4 + 2t^3 + t + 1)(128t^{14} + 64t^{12} + 2t^2 + 1)
$

$
D_5: (4t^4 + 2t^3 + t + 1)(32768t^{30} + 4096t^{25} + 4t^5 + 1)
$

\bigskip

Multiplying the curve equation by $x^2$ and replacing $xy$ by $y$ shows that
the curve $D_k$ is birational to
\[
E_k: y^2+xy=x^{2^k+3}+x.
\]
Thus it follows that $D_k$ is of genus $2^{k-1}+1$. Now considering the degree 2 map 
$\psi: D_k\longrightarrow \mathbb{P}^1$ which maps the point $(x,y)$ to $x$, the ramification points are $P_0$ and $P_{\infty}$, where $P_0$ is the point with $x$-coordinate $0$ and $P_{\infty}$ is the point at infinity. 
It follows that the ramification divisor is $2 g_{D_k} P_0+ 2 P_{\infty}$. 
Using the Hurwitz genus formula this fact also shows that the genus of $D_k$ is $2^{k-1}+1$. Notice that the curve $D_k$ has one singularity at $P_{\infty}$.

\begin{lemma}\label{prankD}
The $2$-rank of $D_k$ is $1$.
\end{lemma}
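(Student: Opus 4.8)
The plan is to compute the $2$-rank of $D_k$ via the Deuring--Shafarevich formula, which relates the $p$-rank of a curve to that of the base of a $p$-covering. First I would observe that the defining equation
\[
D_k: y^2+y=x^{2^k+1}+x^{-1}
\]
exhibits $D_k$ as an Artin--Schreier cover of $\mathbb{P}^1$ of degree $p=2$, via the map $\psi:D_k\to\mathbb{P}^1$ sending $(x,y)\mapsto x$. This is exactly the degree-$2$ map already identified in the text, with ramification (equivalently, wild ramification, since we are in characteristic $2$) only at the two points $P_0$ (where $x=0$) and $P_\infty$ (where $x=\infty$), because the right-hand side $x^{2^k+1}+x^{-1}$ has poles precisely at $x=0$ and $x=\infty$ and is finite elsewhere.

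The key step is to apply the Deuring--Shafarevich formula for a $\Z/p\Z$-cover $\psi:D_k\to\mathbb{P}^1$ over an algebraically closed field of characteristic $p$:
\[
r_p(D_k)-1 = p\,(r_p(\mathbb{P}^1)-1) + \sum_{Q}(d_Q - 1),
\]
where the sum runs over the branch points $Q$ of $\psi$ and $d_Q$ is the order of the ramification group (here $d_Q=p=2$ at each of $P_0$ and $P_\infty$, since the cover is totally ramified there). Since $r_2(\mathbb{P}^1)=0$ and there are exactly two branch points, each contributing $d_Q-1=1$, the formula gives $r_2(D_k)-1 = 2\cdot(0-1) + (1+1) = 0$, hence $r_2(D_k)=1$. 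Alternatively, one can argue directly on the birational model $E_k: y^2+xy = x^{2^k+3}+x$, but the Artin--Schreier form is cleaner for invoking Deuring--Shafarevich.

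The main obstacle I anticipate is bookkeeping the ramification correctly: one must verify that at $P_0$ the substitution $x\mapsto x^{-1}$ really does produce a pole of the Artin--Schreier right-hand side (so that $P_0$ is genuinely a branch point and not, say, absorbed into the polynomial part after a change of variable), and that no \emph{other} point of $\mathbb{P}^1$ becomes a branch point; this amounts to checking that $x^{2^k+1}+x^{-1}$ cannot be written as $w^2 - w + (\text{regular at that point})$ locally anywhere except at $x=0,\infty$. Once the two branch points and their (wild) ramification are pinned down, the Deuring--Shafarevich computation is immediate and the conclusion $r_2(D_k)=1$ follows for all $k\ge 1$, consistent with the genus-$2^{k-1}+1$ curve having $2$-rank far below its genus (i.e., $D_k$ is very far from ordinary).
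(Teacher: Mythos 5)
Your proof is correct and follows essentially the same route as the paper: both apply the Deuring--Shafarevich formula to the degree-$2$ Artin--Schreier cover $\psi:D_k\to\mathbb{P}^1$, identify the two branch points coming from the poles of $x^{2^k+1}+x^{-1}$ at $x=0$ and $x=\infty$, and conclude $r_2(D_k)=1$. The paper simply quotes the formula in the equivalent form ``$p$-rank $=m(p-1)$ where $m+1$ is the number of poles,'' whereas you spell out the cover-theoretic version and the ramification bookkeeping; the content is identical.
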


\begin{proof}
By the Deuring-Shafarevitch formula, for an Artin-Schreier curve 
$y^p-y=f(x)$ the
$p$-rank is $m(p-1)$ where $m+1$ is the number of poles of $f(x)$.
Our curves $D_k$ have two poles, at 0 and $\infty$.
\end{proof}

Poonen~\cite{Poonen} studied the following family of curves
\[
L_g:y^2+y=x^{2g-1}+x^{-1}
\]
and showed that their automorphism group consists of the identity and the hyperelliptic involution. Notice that $L_g$ is of genus $g$, and the family of curves $L_g$ 
includes $D_k$. 
Computer experiments show that the analogue of Conjecture \ref{conjdk}
is false for the larger family of $L_g$ curves.
As we need the theorem of Poonen in the rest of this section we include its proof
 for the sake of completeness.
 
\begin{thm}\cite{Poonen}\label{Poonen}
The automorphism group of $L_g$ consists of the identity and the hyperelliptic involution.
\end{thm}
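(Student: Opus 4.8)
The plan is to show that any automorphism $\sigma$ of $L_g$ must fix the hyperelliptic divisor class (since $L_g$ is hyperelliptic of genus $g \ge 2$, the $g^1_2$ is unique), hence $\sigma$ descends to an automorphism $\bar\sigma$ of $\mathbb{P}^1$ that permutes the branch locus of the double cover $\psi\colon L_g \to \mathbb{P}^1$. First I would locate the branch locus: writing the curve as $y^2 + y = f(x)$ with $f(x) = x^{2g-1} + x^{-1}$, in characteristic $2$ this Artin--Schreier-type double cover is ramified exactly where $f$ has poles, namely at $x = 0$ and $x = \infty$ (with the appropriate wild ramification). So $\bar\sigma \in \mathrm{PGL}_2(\overline{\mathbb{F}_2})$ must fix the set $\{0, \infty\}$, which forces $\bar\sigma(x) = ax$ or $\bar\sigma(x) = a/x$ for some $a \in \overline{\mathbb{F}_2}^\times$.

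Next I would pull these candidate maps back and see which ones actually lift to an automorphism of the curve. For $\bar\sigma(x) = ax$: the curve equation becomes $y^2 + y = a^{2g-1} x^{2g-1} + a^{-1} x^{-1}$, and for an automorphism we need this to be equivalent to the original equation via a change of the form $y \mapsto y + h(x)$ with $h \in \overline{\mathbb{F}_2}(x)$, i.e. $h^2 + h = (a^{2g-1}-1)x^{2g-1} + (a^{-1}-1)x^{-1}$. Comparing pole orders: $x^{2g-1}$ has odd pole order $2g-1$ at infinity, which cannot be cancelled by $h^2+h$ unless its coefficient vanishes (the image of $h \mapsto h^2 + h$ on Laurent tails has only even-order poles plus the Artin--Schreier obstruction), so $a^{2g-1} = 1$; similarly the $x^{-1}$ term forces $a = 1$. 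Hence $\bar\sigma$ is the identity, and the only lifts are $y \mapsto y$ and $y \mapsto y+1$, the latter being the hyperelliptic involution. The case $\bar\sigma(x) = a/x$ swaps the roles of the two poles: it sends $x^{2g-1} \mapsto a^{2g-1}x^{-(2g-1)}$ and $x^{-1} \mapsto a^{-1}x$, producing a pole of order $2g-1 \ge 3$ at $x=0$ on the right-hand side where the original has only a simple pole, a contradiction for $g \ge 2$ by the same pole-order argument. This rules out the second family entirely.

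The main obstacle will be making the pole-order / Artin--Schreier reduction argument rigorous and uniform in characteristic $2$: one must argue carefully that a relation $h^2 + h = r(x)$ with $r \in \overline{\mathbb{F}_2}(x)$ constrains the principal parts of $r$ at each place (at a pole of $h$ of order $n$, $h^2$ contributes a pole of order $2n$, so $r$ can only have poles of even order, and the residue-type Artin--Schreier conditions must be checked), and then to deduce that the ``extra'' terms $(a^{2g-1}-1)x^{2g-1}$ and $(a^{-1}-1)x^{-1}$ cannot be absorbed unless their coefficients are zero. One should also dispose of the genus $1$ case separately (or note that the statement is only claimed for $g \ge 2$, i.e. whenever $L_g$ is genuinely hyperelliptic) and verify that the hyperelliptic involution is indeed nontrivial and central, so that the automorphism group is exactly $\mathbb{Z}/2\mathbb{Z}$. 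Once the branch-locus rigidity and the pole-order obstruction are in place, the proof assembles quickly.
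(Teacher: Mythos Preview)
Your proposal is correct and follows essentially the same route as the paper: descend any automorphism to an element of $\mathrm{PGL}_2$ preserving the branch set $\{0,\infty\}$, then use Artin--Schreier theory (your pole-order obstruction) to show that no nontrivial $x\mapsto \lambda x$ lifts. The paper's version is terser---it simply invokes ``Artin--Schreier theory'' to assert that the twisted equations $y^2+y=(\lambda x)^{2g-1}+(\lambda x)^{-1}$ define distinct covers for $\lambda\neq 1$, and it does not separately treat the inversion $x\mapsto a/x$ (implicitly excluded by the unequal wild ramification at $0$ and $\infty$)---so your write-up is, if anything, more complete.
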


\begin{proof}
The ramification points of degree 2 map $\psi: L_g \longrightarrow \mathbb{P}^1$ which maps the point $(x,y)$ to $x$ are $P_0$ and $P_{\infty}$. It is well known that  the Weierstrass points of hyperelliptic curves are exactly the ramification points of the hyperelliptic involution and any automorphism fixes the set of Weierstrass points. It is also well known that the automorphism of the hyperelliptic curves are lifts of the automorphisms of $\mathbb{P}^1$. Thus it follows that automorphisms of $L_g$ are lifts of the maps taking $x$ to $\lambda x$ for some nonzero $\lambda$ in the algebraic closure of $\F_2$. But according to Artin-Schreier theory the following two 
\[
y_1^2+y_1=x^{2g-1}+x^{-1}
\]
and
\[
y_2^2+y_2=(\lambda x)^{2g-1}+(\lambda x)^{-1}
\]
are distinct.
\end{proof}

A consequence of Theorem \ref{Poonen} is that the Kani-Rosen method 
(Theorem \ref{kr}) will not
work if we want to use it to prove Conjecture \ref{conjdk}.

\bigskip

The second approach mentioned in the introduction that one might use to prove
Conjecture \ref{conjdk} is the Kleiman-Serre theorem, Theorem \ref{KleimanSerre}.
To apply this theorem one would have to show that there is a map
$D_k \longrightarrow D_1$ for any $k>1$.
If there were such a map, there would in particular be a map 
$D_2 \longrightarrow D_1$.
However, the following theorem shows that there is no  morphism from $D_2$ to $D_1$, and hence the Kleiman-Serre  theorem does not apply to our case.

\begin{thm}\label{nomap}
There is no non-constant morphism from the curve $D_{k+1}$ to $D_k$,
for any $k\geq 1$.
\end{thm}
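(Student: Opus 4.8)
Suppose, for contradiction, that there is a non-constant morphism $\phi: D_{k+1} \longrightarrow D_k$. By Theorem~\ref{non-constant morphism} it is surjective, and since both curves are defined over $\F_2$, Corollary~\ref{Separable} lets us assume $\phi$ is separable. Now I would play off two invariants against each other: the genus and the $2$-rank. We have $g_{D_{k+1}} = 2^{k}+1$ and $g_{D_k} = 2^{k-1}+1$, and by Lemma~\ref{prankD} both curves have $2$-rank equal to $1$. The Hurwitz genus formula applied to $\phi$ reads
\[
2g_{D_{k+1}} - 2 = (\deg\phi)(2g_{D_k}-2) + \deg \mathfrak{R},
\]
where $\mathfrak{R}$ is the (effective) ramification divisor; for the $2$-rank I would invoke the Deuring--Shafarevich formula for the separable cover $\phi$, which gives
\[
r_p(D_{k+1}) - 1 = (\deg\phi)\bigl(r_p(D_k) - 1\bigr) + (\text{wild part}),
\]
i.e.\ $1 - 1 = (\deg\phi)(1-1) + (\text{nonnegative wild contribution})$, forcing $\phi$ to be unramified at every place where the cover is wildly ramified — in fact forcing the cover to be everywhere unramified if it were tame, and more generally pinning down the ramification very tightly.

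\textbf{Key steps.} First, reduce to $\phi$ separable via Corollary~\ref{Separable}. Second, combine Hurwitz with Deuring--Shafarevich: writing $d = \deg\phi$, the $2$-rank identity says the total ramification contribution to the $2$-rank is zero, which (since each ramified place contributes nonnegatively) means the cover is at worst tamely ramified and, if tame, must actually be \emph{\'etale}, so that $r_p$ behaves multiplicatively forces $d \cdot 0 = 0$ with no room — then Hurwitz gives $2^{k}+1-1 = d\,(2^{k-1}+1-1)$, i.e.\ $2^{k} = d\cdot 2^{k-1}$, so $d = 2$; and then one uses the explicit structure of $D_{k+1}$ (it is hyperelliptic of genus $\geq 3$, with hyperelliptic map $\psi$ to $\mathbb{P}^1$) to see a degree-$2$ map to another curve of genus $\geq 2$ is impossible, or that it would have to \emph{be} the hyperelliptic map, contradicting $g_{D_k} \geq 2$. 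Third, handle the wildly ramified case: here I would use that the only candidate branch locus is concentrated over the two special points $P_0, P_\infty$ (the poles of $f$ in the Artin--Schreier presentation), track the different exponents there, and show the Deuring--Shafarevich bookkeeping is still violated. An alternative, cleaner third step: use that $\gcd$-type or $2$-adic-valuation constraints on the genus rule out any $d$ once the ramification is controlled.

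\textbf{Main obstacle.} The delicate point is the wildly ramified case: Deuring--Shafarevich only controls the \emph{jumps in the $2$-rank}, not the full different, so I cannot immediately conclude the cover is \'etale, and I must analyze the wild ramification over $P_0$ and $P_\infty$ explicitly. I expect the cleanest route is to argue that any such $\phi$ would, after composing with the hyperelliptic maps, descend to a map $\mathbb{P}^1 \to \mathbb{P}^1$ — as in the proof of Theorem~\ref{Poonen}, automorphisms and covers of hyperelliptic curves are governed by the $\mathbb{P}^1$ below — and then the Artin--Schreier equations $y^2+y = x^{2^{k+1}+1}+x^{-1}$ and $y^2+y = x^{2^k+1}+x^{-1}$ are simply incompatible under any fractional-linear (or low-degree rational) substitution in $x$, by comparing the pole orders $2^{k+1}+1$ versus $2^k+1$ (both odd, hence genuinely contributing after Artin--Schreier reduction). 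Matching these pole structures is where all the work lives, but it is the same flavor of computation already used for Theorem~\ref{Poonen}.
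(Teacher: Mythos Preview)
Your plan reaches the right destination but by a needlessly circuitous route, and along the way you misidentify the main obstacle.

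The paper's proof is much shorter. After reducing to $\phi$ separable (as you do), Riemann--Hurwitz \emph{alone} finishes the numerical part: writing the genuine equality $2g_{D_{k+1}}-2 = d\,(2g_{D_k}-2)+\deg\mathfrak R$ with $\deg\mathfrak R\ge 0$ gives $2^{k+1}=d\cdot 2^{k}+\deg\mathfrak R$, so $d\le 2$; since $d=1$ is excluded by the genera, $d=2$ and $\deg\mathfrak R=0$. There is no ``wildly ramified case'' to handle separately: the different is forced to be zero outright, so the cover is \'etale. Your invocation of Deuring--Shafarevich is therefore superfluous; worse, as you have written it (for an arbitrary separable cover, before you know it is Galois with $2$-group Galois group) it is not even applicable in its standard form.

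Once $d=2$ and $\phi$ is separable, $\phi$ is automatically Galois, so there is a nontrivial involution $\sigma\in\mathrm{Aut}(D_{k+1})$ with $D_{k+1}/\langle\sigma\rangle\cong D_k$. Here is where your write-up has a genuine gap: you assert that ``a degree-$2$ map to another curve of genus $\ge 2$ is impossible'' for a hyperelliptic curve, or that such a map ``would have to be the hyperelliptic map.'' That is false for hyperelliptic curves in general (think of hyperelliptic curves that are also bielliptic). What makes it true here is precisely Theorem~\ref{Poonen}: the only nontrivial automorphism of $D_{k+1}$ is the hyperelliptic involution, whose quotient is $\mathbb{P}^1$, not a curve of genus $2^{k-1}+1\ge 2$. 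You must cite Poonen at this step, not merely allude to it later in your alternative approach. With that in place the proof is three lines; the descent-to-$\mathbb{P}^1$ and pole-order matching you sketch at the end are unnecessary.
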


\begin{proof}
Suppose there is a morphism
\[
\phi: D_{k+1}\longrightarrow D_k.
\]
Using Corollary~\ref{Separable}, we can assume that $\phi$ is a separable map.
Applying the Riemann-Hurwitz genus formula~\cite[Theorem 5.9]{Silverman} we have
\[
2g_{D_{k+1}}-2\ge \deg(\phi) (2g_{D_k}-2)+ \sum_{P\in D_{k+1}}( e(P)-1).
\]
Since $g_{D_{k+1}}=2^{k}+1$ and $g_{D_{k}}=2^{k-1}+1$, it follows that $\deg( \phi)=2$ and $\sum_{P\in D_{k+1}} (e(P)-1)=0$. So $D_{k+1}$ is an unramified double cover of $D_k$. Now $\phi$ is a separable map of degree 2. Thus
$D_{k+1}$ is a Galois cover of $D_k$. This would imply that there is an involution other than the hyperelliptic involution in the automorphism group of $D_{k+1}$ which contradicts Theorem~\ref{Poonen}.  Notice that here we need the separability of $\phi$ as our curves are defined over a field of characteristic two.
\end{proof}

\begin{cor}
There is no non-constant morphism from the curve $D_{2}$ to $D_1$.
\end{cor}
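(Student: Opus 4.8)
The final statement to prove is the corollary: there is no non-constant morphism from $D_2$ to $D_1$.

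The plan is to apply Theorem \ref{nomap} directly. That theorem asserts there is no non-constant morphism from $D_{k+1}$ to $D_k$ for any $k \geq 1$. Taking $k=1$ in Theorem \ref{nomap} yields immediately that there is no non-constant morphism from $D_2$ to $D_1$, which is precisely the statement of the corollary.

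So the only step is the specialization $k=1$, and there is no obstacle: the corollary is a trivial instance of the preceding theorem, stated separately only to highlight the case relevant to Conjecture \ref{conjdk} (namely that the Kleiman-Serre theorem cannot be used to establish divisibility of $L_{D_2}(t)$ by $L_{D_1}(t)$). One might add a sentence recalling why this matters: if such a morphism existed, Theorem \ref{KleimanSerre} would give the divisibility for free, but Theorem \ref{nomap} rules this out.

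\begin{proof}
This is the special case $k=1$ of Theorem~\ref{nomap}.
\end{proof}
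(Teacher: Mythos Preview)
Your proposal is correct and matches the paper's treatment: the corollary is stated immediately after Theorem~\ref{nomap} with no separate proof, precisely because it is the trivial specialization $k=1$.
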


We have shown that the Kani-Rosen and Kleiman-Serre approaches will not
apply to prove Conjectures  \ref{conjdk} and \ref{conjdk2}.
%In the next sections we will discuss a conjecture that, if true,
%will prove Conjectures  \ref{conjdk} and \ref{conjdk2} via Theorem \ref{main}.

%It should be mentioned that n n-cover of a a hyperelliptic curve is Galois if and only if n=1,2,4.

\section{Motivation}\label{motive}

In this section we will give our motivation for studying the curves $D_k$,
which comes from a problem on certain exponential sums.

In \cite{JHK}, while studying the cross-correlation of m-sequences, Johansen, Helleseth and Kholosha considered the following exponential sums
\[
G_m^{(k)}=\sum_{x\in \F_{2^m}^*}(-1)^{Tr_m(x^{2^k+1}+x^{-1})}
\]
where $Tr_m(.)$ denotes the trace function from $\F_{2^m}$ to $\F_2$.
In particular, $G_m^{(1)}=\sum_{x\in \F_{2^m}^*}(-1)^{Tr_m(x^{3}+x^{-1})}$.
They made the following conjecture.

\begin{conjecture}\label{Helleseth-conj} 
 If $\gcd(k,m) = 1$ then
$G_m^{(k)}=G_m^{(1)}$.
\end{conjecture}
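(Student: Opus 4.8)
The plan is to translate the exponential sum into a point count on $D_k$ and then bring Theorems~\ref{main} and~\ref{converse} to bear. First I would compute $\#D_k(\F_{2^m})$ directly via the degree $2$ map $\psi\colon D_k\to\mathbb P^1$, $(x,y)\mapsto x$: its two ramification places $P_0,P_\infty$ are $\F_2$-rational and contribute one $\F_{2^m}$-point each, while over $a\in\F_{2^m}^*$ the fibre is the set of $y\in\F_{2^m}$ with $y^2+y=a^{2^k+1}+a^{-1}$, which has $1+(-1)^{Tr_m(a^{2^k+1}+a^{-1})}$ elements. Summing,
\[
\#D_k(\F_{2^m})=2+(2^m-1)+\sum_{a\in\F_{2^m}^*}(-1)^{Tr_m(a^{2^k+1}+a^{-1})}=2^m+1+G_m^{(k)},
\]
so that $G_m^{(k)}=-\sum_i\alpha_{k,i}^m$ where the $\alpha_{k,i}$ are the Frobenius eigenvalues of $D_k$. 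Thus Conjecture~\ref{Helleseth-conj} is \emph{exactly} the assertion that $\#D_k(\F_{2^m})=\#D_1(\F_{2^m})$ for every $m$ coprime to $k$.

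Next I would upgrade this point-count equality to the L-polynomial identity of Conjecture~\ref{conjdk2}, which by the logarithm computation in the proof of Theorem~\ref{converse}, carried out one prime at a time, conversely implies it back. When $k=p^a$ is a prime power, ``$\gcd(m,k)=1$'' coincides with ``$p\nmid m$'', so the point-count equality is precisely hypothesis~(1) of Theorem~\ref{main} applied \emph{with the prime $p$} (not with $k$) to $C=D_1$, $D=D_k$; granting hypothesis~(2), namely that the $p$-th powers of the four roots of $L_{D_1}(t)=4t^4+2t^3+t+1$ are pairwise distinct, Theorem~\ref{main} yields $L_{D_k}(t)=q(t^p)L_{D_1}(t)$, which is Conjecture~\ref{conjdk2} for $k=p^a$. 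For general $k=p_1^{a_1}\cdots p_r^{a_r}$ one cannot feed Theorem~\ref{main} a single prime, because Helleseth supplies the equality only for those $m$ divisible by none of the $p_i$; here I would induct on the number of prime factors, using the elementary identities $G_m^{(k)}=G_m^{(k\bmod m)}$ and $G_m^{(k)}=G_m^{(m-k)}$ (both obtained by composing $x$ with a power of the Frobenius) to handle the exponents $m$ divisible by some but not all of the $p_i$. This bookkeeping I expect to be routine.

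The real obstacles are two. First, hypothesis~(2) of Theorem~\ref{main} for $D_1$: since $D_1$ has $2$-rank $1$ by Lemma~\ref{prankD} it is not ordinary, so one cannot quote the usual irreducibility of $L^{(n)}$ for ordinary curves, and instead must rule out by hand --- from the Newton polygon of $L_{D_1}$, with slopes $0,1/2,1/2,1$, together with the irreducibility of $P_{D_1}(t)=t^4+t^3+2t+4$ over $\Q$ --- that a quotient of two Frobenius eigenvalues of $D_1$ is a root of unity. Second, and more fundamental: Theorem~\ref{main} only transfers information \emph{from} point counts \emph{to} L-polynomials, so even with hypothesis~(2) it delivers the equivalence of Conjectures~\ref{conjdk2} and~\ref{Helleseth-conj}, not their truth; a proof of either still needs a new ingredient, and by Theorems~\ref{nomap} and~\ref{Poonen} that ingredient cannot come from the classical sources --- there is no non-constant morphism $D_{k+1}\to D_k$ and no automorphism of $D_k$ beyond the hyperelliptic involution. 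It would have to come from a direct analysis of the sums $G_m^{(k)}$ themselves, exploiting $\gcd(m,k)=1$ through the arithmetic of $\F_{2^m}$ and the factorization of $2^k+1$ modulo $2^m-1$, and that is where I expect the genuine difficulty to lie.
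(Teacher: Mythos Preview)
The statement you are attempting to prove is stated in the paper as an open \emph{conjecture}; the paper does not prove it, and indeed Section~\ref{motive} and the subsequent section are devoted only to establishing (partial) equivalences between Conjecture~\ref{Helleseth-conj} and Conjectures~\ref{conjdk} and~\ref{conjdk2}. You correctly arrive at exactly this conclusion in your final paragraph: the machinery of Theorems~\ref{main} and~\ref{converse} can only interconvert the point-count and L-polynomial formulations, and a genuine proof of either would require an idea not available in the paper. So there is no ``paper's own proof'' to compare against, and your proposal is not a proof of the conjecture but an honest assessment that ends where the paper ends.

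Two small remarks on the part of your proposal that overlaps with what the paper actually does. First, your point-count formula has the opposite sign from the paper's ($\#D_k(\F_{2^m})=2^m+1+G_m^{(k)}$ versus the paper's $2^m+1-G_m^{(k)}$); your computation looks correct, but in any case the sign is irrelevant to the equivalence $G_m^{(k)}=G_m^{(1)}\iff\#D_k(\F_{2^m})=\#D_1(\F_{2^m})$. Second, your plan for composite $k$ --- using the identities $G_m^{(k)}=G_m^{(k\bmod m)}$ and $G_m^{(k)}=G_m^{(m-k)}$ to ``handle the exponents $m$ divisible by some but not all of the $p_i$'' --- is not the paper's route and, as stated, does not do what you want: those identities change $k$, not the set of admissible $m$, and reducing $k$ modulo $m$ does not alter $\gcd(k,m)$. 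The paper instead inducts on the number of prime factors via a $p$-rank argument (Lemma~\ref{maninapp} and Theorem~\ref{12to6}) to get Conjecture~\ref{conjdk}, and then uses Lemmas~\ref{radical-subring-2} and~\ref{rattopoly} to obtain Conjecture~\ref{conjdk2} --- but only when $k$ has at most two prime factors. For general $k$ the paper too stops at a weaker, rational-function version of Conjecture~\ref{conjdk2}.
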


They also made a more general conjecture, that
$G_m^{(k)}= G_m^{(gcd(k,m))}$,
i.e., that $G_m^{(k)}$
 depends only on $\gcd(k,m)$. 

Now if $Tr_m(u^{3}+u^{-1})=0$ for $u\in\F_{2^m}$, then there
are two points on the following curve considered over $\F_{2^m}$ 
\[
D_k: y^2+y=x^{2^k+1}+x^{-1}
\]
with $x$-coordinate equal to $u$, and if $Tr_m(u^{3}+u^{-1})=1$ then there is no point on $D_k$ 
with $x$-coordinate equal to $u$. Thus we have
\[
 \# D_k (\F_{2^m})=2^m+1- G_m^{(k)}.
\]
Hence we may restate Conjecture~\ref{Helleseth-conj} as follows.

\begin{conjecture}
 When $\gcd(k,m)=1$ the curves $D_1$ and $D_k$
have the same number of rational points 
over $\F_{2^m}$.
\end{conjecture}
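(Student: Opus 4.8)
The plan is not to attack the point count directly but to tie it to the L-polynomial conjectures of Section~4 and move between the two using Theorem~\ref{main} and its converse, Theorem~\ref{converse}. The easy direction is ``Conjecture~\ref{conjdk2} $\Rightarrow$ the statement''. Write $L_{D_k}(t)=q_1(t^{p_1})\cdots q_r(t^{p_r})L_{D_1}(t)$, where $p_1,\dots,p_r$ are the distinct primes dividing $k$; since $L_{D_k}(0)=L_{D_1}(0)=1$ the integers $q_i(0)$ multiply to $1$, so each $q_i(t^{p_i})/q_i(0)$ is a polynomial in $t^{p_i}$ with constant term $1$ and $\log L_{D_k}(t)-\log L_{D_1}(t)=\sum_{i=1}^{r}\log\bigl(q_i(t^{p_i})/q_i(0)\bigr)$ in $\Q[[t]]$, a sum whose $i$-th term involves only powers of $t^{p_i}$. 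Hence the coefficient of $t^m$ vanishes whenever $p_i\nmid m$ for every $i$, i.e.\ whenever $\gcd(k,m)=1$; comparing with $\log L_D(t)=\sum_{m\ge1}(\#D(\F_{2^m})-1-2^m)t^m/m$ (as in the proof of Theorem~\ref{converse}) gives $\#D_k(\F_{2^m})=\#D_1(\F_{2^m})$ for all $m$ coprime to $k$. The bare divisibility of Conjecture~\ref{conjdk} would not suffice here: the cofactor is then an unconstrained element of $\Z[t]$, its logarithm may contribute at every degree, and the point counts need not agree anywhere --- the $\F_3$ example of Section~3 shows how far apart the two notions can be.

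For the reverse direction one turns to Theorem~\ref{main}. When $k=p^a$ is a prime power this is clean: apply the theorem with $C=D_1$, $D=D_k$ and parameter $p$ (not $k$), noting that ``$p\nmid m$'' is exactly ``$\gcd(p^a,m)=1$'', so hypothesis~(1) is precisely the statement we are after and the conclusion $L_{D_k}(t)=q(t^p)L_{D_1}(t)$ is Conjecture~\ref{conjdk2} in this case --- together with the previous paragraph, the statement is then \emph{equivalent} to Conjecture~\ref{conjdk2} for prime powers. For composite $k$ one strips the primes off $D_k$ one at a time, applying Theorem~\ref{main} with $C=D_{k/p}$, $D=D_k$ and parameter $p$ and merging the new $t^p$-factor into the factorization of $L_{D_{k/p}}$ obtained inductively; but hypothesis~(1) for the pair $(D_{k/p},D_k)$ now involves extensions $\F_{2^m}$ with $\gcd(k,m)\neq 1$, so it requires the stronger conjecture of~\cite{JHK} that $G_m^{(k)}$ depends only on $\gcd(k,m)$, not merely the statement at hand. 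Either way there is a second, quieter cost: hypothesis~(2) of Theorem~\ref{main} demands that the $p$-th powers of the Frobenius eigenvalues of the smaller curve be distinct, and this is genuinely delicate --- already $L_{D_2}$ carries the factor $2t^2+1$, whose two roots have equal square --- so it must be checked (or conjectured) curve by curve.

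The real obstacle is that hypothesis~(1) above \emph{is} the exponential-sum conjecture, so this circle of ideas yields equivalences but does not by itself break into the problem; an independent input is needed, and I would look for it geometrically. The curves $D_k\colon y^2+y=x^{2^k+1}+x^{-1}$ are Artin--Schreier covers with two branch points, $2$-rank $1$ (Lemma~\ref{prankD}) and genus $2^{k-1}+1$, and the natural attempt is to exhibit an $\F_2$-rational correspondence on $D_k\times D_1$ --- say one built from a substitution $x\mapsto x^{2^j}$ on the $x$-line together with the additivity of point counts in the fibres of the hyperelliptic map $(x,y)\mapsto x$ --- that induces an isogeny factor $\mathrm{Jac}(D_1)$ of $\mathrm{Jac}(D_k)$; alternatively one could try to evaluate the Niho-type sums $G_m^{(k)}$ directly via Gauss- and Kloosterman-sum identities and read the divisibility off the answer. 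This is where the difficulty concentrates: Theorem~\ref{nomap} forbids any non-constant morphism $D_k\to D_1$ and Theorem~\ref{Poonen} forbids useful automorphisms, so neither Kani--Rosen (Theorem~\ref{kr}) nor Kleiman--Serre (Theorem~\ref{KleimanSerre}) applies --- which is precisely why Theorem~\ref{main} was proved --- and the relation between the Jacobians must be produced by hand. A sensible first target is $k$ prime, where Conjecture~\ref{conjdk2} reads simply $L_{D_k}(t)=q(t^k)L_{D_1}(t)$ and all of the above is at its most tractable.
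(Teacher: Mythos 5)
This statement is a \emph{conjecture} in the paper, not a theorem: the paper never proves it, and neither do you --- your proposal says so explicitly, which is the honest and correct position. The paper's entire ``proof'' content attached to this statement is the single identity $\#D_k(\F_{2^m})=2^m+1-G_m^{(k)}$, obtained by counting points in the fibres of $(x,y)\mapsto x$ according to whether $Tr_m(x^{2^k+1}+x^{-1})$ vanishes; this shows the conjecture is a verbatim restatement of Conjecture~\ref{Helleseth-conj}, and that is all. Your proposal never states this identity, which is the one piece of actual content the paper supplies here; everything else you write is a (largely accurate) reconstruction of the equivalence results of Section~6. In particular: your ``easy direction'' is Theorem~\ref{7to12}; your prime-power reverse direction is Theorem~\ref{jacdiv1}; and your observation that the inductive step for composite $k$ needs the stronger conjecture that $G_m^{(k)}$ depends only on $\gcd(k,m)$ is a genuine and correct reading of what Theorem~\ref{12to6} actually uses. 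One place where you see a difficulty the paper has already resolved: you worry that hypothesis~(2) of Theorem~\ref{main} fails for the intermediate curves (e.g.\ the factor $2t^2+1$ of $L_{D_2}$, whose roots have equal squares). The paper avoids ever invoking root-distinctness for $D_{k/p}$: in Theorem~\ref{12to6} it only uses distinctness of the $p_m$-th powers of the roots of $L_{D_1}$ (guaranteed by irreducibility of $L_{D_1}^{(r)}$ for all $r$, via Maisner--Nart) together with the $p$-rank argument of Lemma~\ref{maninapp} to exclude $L_{D_1}\mid P(t^{p_m})$.

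The concrete gap, then, is simply that no proof of the conjecture exists --- yours or the paper's. The only cases actually established are $k\le 5$, and the route is computational: compute $L_{D_k}$ over $\F_2$ with MAGMA, observe the factorization required by Conjecture~\ref{conjdk2}, and apply Theorem~\ref{7to12}. If you want to claim the statement for any specific $k$, that is the argument to give; your closing paragraph of speculative strategies (correspondences on $D_k\times D_1$, Kloosterman-sum identities) is reasonable as a research program but is not part of any proof, and should not be presented inside a proof environment.
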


While investigating Conjecture~\ref{Helleseth-conj}, we looked at the zeta function of $D_k$
and we observed empirically using MAGMA \cite{MR1484478} that the L-polynomial of $D_1$ over $\F_{2}$ divides
the L-polynomial of $D_k$ over $\F_{2}$, and
we made Conjecture \ref{conjdk}.
In the next section we will prove that Conjecture~\ref{Helleseth-conj} is equivalent
to Conjecture \ref{conjdk2}.

\section{Equivalence of Conjectures}

In this section we consider the relationship between
Conjectures \ref{conjdk} and \ref{conjdk2} and Conjecture \ref{Helleseth-conj}.
%In fact we will prove some stronger theorems. 

\subsection{Conjecture \ref{conjdk2} implies Conjecture \ref{Helleseth-conj} }

%\begin{thm}\label{pp7to12}
%Let $k$ be a prime power.
%Then Conjecture \ref{conjdk2} implies Conjecture \ref{Helleseth-conj}.
%\end{thm}

%\begin{proof}
%This follows immediately  from Theorem \ref{converse}.
%\end{proof}

\begin{thm}\label{7to12}
For any $k\geq 1$, Conjecture \ref{conjdk2} implies Conjecture \ref{Helleseth-conj}.
\end{thm}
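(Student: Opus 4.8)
The plan is to imitate the proof of Theorem~\ref{converse}, which passes to logarithms of L-polynomials, replacing there the single substitution $t \mapsto t^k$ by the several substitutions $t \mapsto t^{p_i}$. Recall from Section~\ref{motive} that $\#D_k(\F_{2^m}) = 2^m + 1 - G_m^{(k)}$ for every $m \geq 1$, so Conjecture~\ref{Helleseth-conj} is equivalent to the assertion that $\#D_k(\F_{2^m}) = \#D_1(\F_{2^m})$ whenever $\gcd(k,m) = 1$. It therefore suffices to extract this equality of point counts from the factorization $L_{D_k}(t) = q_1(t^{p_1}) \cdots q_m(t^{p_m}) L_{D_1}(t)$ furnished by Conjecture~\ref{conjdk2}.

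First I would normalize the factors. Evaluating at $t = 0$ gives $\prod_i q_i(0) = L_{D_k}(0)/L_{D_1}(0) = 1$, so each integer $q_i(0)$ equals $\pm 1$ and an even number of them equal $-1$; replacing $q_i$ by $-q_i$ for the offending indices, taken in pairs, we may assume $q_i(0) = 1$ for all $i$ without altering the product $q_1(t^{p_1}) \cdots q_m(t^{p_m})$. Then, working in $\Q[[t]]$,
\[
\log L_{D_k}(t) = \log L_{D_1}(t) + \sum_i \log q_i(t^{p_i}),
\]
and each $\log q_i(t^{p_i})$ is a genuine power series in $t^{p_i}$, so only exponents divisible by $p_i$ appear in it. On the other hand, exactly as in the proof of Theorem~\ref{converse},
\[
\log L_{D_k}(t) = \sum_{n \geq 1} \bigl( \#D_k(\F_{2^n}) - 1 - 2^n \bigr) \frac{t^n}{n},
\]
and similarly for $D_1$.

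Next I would compare the coefficients of $t^n$ for a fixed $n$ with $\gcd(k,n) = 1$. Since the primes $p_i$ occurring in the factorization of $k$ are precisely its prime divisors, $\gcd(k,n) = 1$ means exactly that $p_i \nmid n$ for every $i$; hence the coefficient of $t^n$ in each $\log q_i(t^{p_i})$ is $0$, and the first displayed identity forces the coefficient of $t^n$ in $\log L_{D_k}$ to equal that in $\log L_{D_1}$. By the second displayed identity this says $\#D_k(\F_{2^n}) = \#D_1(\F_{2^n})$, hence $G_n^{(k)} = G_n^{(1)}$, which is Conjecture~\ref{Helleseth-conj}.

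I do not anticipate a real obstacle: the whole argument is a direct adaptation of Theorem~\ref{converse}. The only two points needing a word of care are the legitimacy of taking logarithms, which is secured by the normalization $q_i(0) = 1$, and the elementary fact that an integer is coprime to $k$ precisely when it is coprime to every prime dividing $k$ — this is what renders all the auxiliary series $\log q_i(t^{p_i})$ invisible in degree $n$.
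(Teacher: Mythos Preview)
Your proof is correct and follows essentially the same route as the paper's: take logarithms of the factorization from Conjecture~\ref{conjdk2}, observe that each $\log q_i(t^{p_i})$ contributes only in degrees divisible by $p_i$, and compare coefficients of $t^n$ for $n$ coprime to $k$. You are in fact more careful than the paper in one respect: you explicitly normalize so that $q_i(0)=1$ before taking logarithms, a point the paper passes over silently.
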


\begin{proof}
This follows from an argument similar to the proof of  Theorem \ref{converse}.
Let $k=p_1^{a_1}\cdots p_m^{a_m}$ be the prime factorization of $k$,
and suppose
\[
L_{D_k}(t)=q_1(t^{p_1})\cdots q_m (t^{p_m})L_{D_1}(t)
\]
for some polynomials $q_i(t)$ in $\Z[t]$. 
Taking the log on both sides leads to an equality of formal power series.
Each of the terms $\log q_i (t^{p_i})$ is a polynomial in $t^{p_i}$,
so for any $m$ that is relatively prime to $k$ the coefficient of
$t^m$ in all these terms is 0.
Therefore $L_{D_k}(t)$ and  $L_{D_1}(t)$ have the same coefficient of $t^m$
for any $m$ with $\gcd (m,k)=1$, and 
Conjecture \ref{Helleseth-conj} is true.
\end{proof}

\begin{cor}
Conjecture \ref{Helleseth-conj} is true for $k\leq 5$.
\end{cor}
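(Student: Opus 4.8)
The final statement to prove is the corollary that Conjecture \ref{Helleseth-conj} (the Helleseth conjecture $G_m^{(k)} = G_m^{(1)}$ when $\gcd(k,m)=1$) holds for $k \le 5$.

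The plan is to combine the just-proved implication (Theorem \ref{7to12}: Conjecture \ref{conjdk2} $\Rightarrow$ Conjecture \ref{Helleseth-conj}) with the verification of Conjecture \ref{conjdk2} in small cases. Concretely, the paper has already reported that Conjecture \ref{conjdk2} was checked with MAGMA for $k \le 5$, exhibiting the explicit factorizations of $L_{D_k}(t)$ for $k=1,2,3,4,5$; one reads off, for instance, $L_{D_2}(t) = (4t^4+2t^3+t+1)(2t^2+1)$, so here $k=2=p_1$ and $q_1(t^2) = 2t^2+1$, and similarly $L_{D_3}(t) = L_{D_1}(t)(8t^6 - 4t^3+1)$ with $q_1(t^3) = 8t^6 - 4t^3+1$, and for $k=4$ one has $L_{D_4}(t) = L_{D_1}(t)(128t^{14}+64t^{12}+2t^2+1)$ which is a polynomial in $t^2$, matching $k = 2^2$ with a single prime $p_1 = 2$. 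So the first step is simply to note that these explicit L-polynomials certify that Conjecture \ref{conjdk2} holds for each $k \in \{1,2,3,4,5\}$.

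The second step is to invoke Theorem \ref{7to12} for each such $k$: since Conjecture \ref{conjdk2} is true for $k \le 5$, and Theorem \ref{7to12} says Conjecture \ref{conjdk2} (for a given $k$) implies the Helleseth statement $G_m^{(k)} = G_m^{(1)}$ whenever $\gcd(m,k)=1$, we conclude that $G_m^{(k)} = G_m^{(1)}$ for all $m$ coprime to $k$, for every $k \le 5$. That is exactly Conjecture \ref{Helleseth-conj} restricted to $k \le 5$, so the corollary follows. The proof is therefore essentially a one-line deduction: \emph{apply Theorem \ref{7to12} together with the MAGMA verification of Conjecture \ref{conjdk2} recorded in Section 4.}

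There is no real mathematical obstacle here — the content is entirely in Theorem \ref{7to12} and in the computational check. The only thing to be slightly careful about is making sure the cases $k=4$ (and conceptually $k = p^a$ with $a>1$) are covered: there the prime factorization has a single prime $p_1=2$ with exponent $2$, and Conjecture \ref{conjdk2} predicts a factor $q_1(t^2)$ rather than $q_1(t^4)$, which is indeed consistent with the displayed L-polynomial $128t^{14}+64t^{12}+2t^2+1$ being a genuine polynomial in $t^2$; so Theorem \ref{7to12} gives equality of point counts for $m$ coprime to $4$, i.e.\ $m$ odd, which is precisely what $\gcd(m,4)=1$ means. Thus the statement is clean and the proof is just the citation of the two preceding results.

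\begin{proof}
By Theorem \ref{7to12}, for each fixed $k$ the truth of Conjecture \ref{conjdk2} for that $k$ implies that $G_m^{(k)} = G_m^{(1)}$ for every $m$ with $\gcd(m,k)=1$. The explicit L-polynomial factorizations of $D_1,\dots,D_5$ recorded in Section 4, obtained by MAGMA computation, show that Conjecture \ref{conjdk2} holds for every $k \le 5$: in each case one reads off polynomials $q_i(t) \in \Z[t]$ with $L_{D_k}(t) = q_1(t^{p_1})\cdots q_m(t^{p_m}) L_{D_1}(t)$, where $p_1,\dots,p_m$ are the distinct primes dividing $k$. Hence $G_m^{(k)} = G_m^{(1)}$ whenever $\gcd(m,k)=1$ and $k \le 5$, which is Conjecture \ref{Helleseth-conj} for $k \le 5$.
\end{proof}
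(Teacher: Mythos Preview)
Your proof is correct and follows essentially the same approach as the paper: you cite the MAGMA-computed L-polynomial factorizations for $k\le 5$ to verify Conjecture~\ref{conjdk2} in these cases, and then invoke Theorem~\ref{7to12} to conclude Conjecture~\ref{Helleseth-conj}. This is exactly what the paper does, with the paper additionally re-listing the five L-polynomials inside the proof itself.
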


\begin{proof}
Using Magma we computed the following L-polynomials:
\bigskip

$
D_1: 4t^4 + 2t^3 + t + 1
$

$
D_2: (4t^4 + 2t^3 + t + 1)(2t^2 + 1)
$

$
D_3: (4t^4 + 2t^3 + t + 1)(8t^6 - 4t^3 + 1)
$

$
D_4: (4t^4 + 2t^3 + t + 1)(128t^{14} + 64t^{12} + 2t^2 + 1)
$

$
D_5: (4t^4 + 2t^3 + t + 1)(32768t^{30} + 4096t^{25} + 4t^5 + 1).
$
\bigskip

Since Conjecture  \ref{conjdk2} is then clearly true for $k\leq 5$,
the result follows from Theorem \ref{7to12}.
\end{proof}

\subsection{Conjecture \ref{Helleseth-conj}  implies Conjecture \ref{conjdk}}

We shall prove that Conjecture \ref{Helleseth-conj} implies Conjecture \ref{conjdk} for general $k$, and Conjecture \ref{conjdk2} if $k$ has at most 2 prime powers. First we consider the case where $k$ is a prime power.

\begin{thm}\label{jacdiv1}
Let $k=p^a$ be a prime power. Then 
Conjecture \ref{Helleseth-conj} implies Conjecture \ref{conjdk2}.
%In other words, if $G_m^{(k)}=G_m^{(1)}$ for all $m$ with $(m,k)=1$
%then  $L_{D_k}(t)=q(t^p)L_{D_1}(t)$ for some polynomial $q(t)$ in $\Z[t]$. 
%In particular, the L-polynomial of $D_k $ is divisible by the L-polynomial of  $D_1$.
\end{thm}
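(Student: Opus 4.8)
The plan is to deduce the statement from Theorem~\ref{main}, the one delicate point being that the integer we feed to that theorem should be the prime $p$, not the prime power $k=p^a$. First I would translate Conjecture~\ref{Helleseth-conj} into the language of point counts. Applied to the index $p^a$, it asserts $G_m^{(p^a)}=G_m^{(1)}$, hence $\#D_{p^a}(\F_{2^m})=\#D_1(\F_{2^m})$, for every $m$ with $\gcd(m,p^a)=1$; but since $k=p^a$ is a prime power, the condition $\gcd(m,p^a)=1$ is just $p\nmid m$. So the pair $C=D_1$, $D=D_{p^a}$ satisfies hypothesis~(1) of Theorem~\ref{main} with the theorem's integer taken to be $p$. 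Granting hypothesis~(2) --- that the $p$-th powers of the roots of $L_{D_1}(t)$ are distinct --- Theorem~\ref{main} then yields $L_{D_{p^a}}(t)=q(t^p)L_{D_1}(t)$ for some $q(t)\in\Z[t]$, which is exactly the assertion of Conjecture~\ref{conjdk2} when $k=p^a$.

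So the real work is checking hypothesis~(2) for a general prime $p$. Writing $L_{D_1}(t)=\prod_{i=1}^{4}(1-\alpha_it)$, this says $\alpha_i^p\neq\alpha_j^p$ whenever $i\neq j$, i.e.\ no quotient $\alpha_i/\alpha_j$ with $i\neq j$ is a root of unity; equivalently, $Jac(D_1)$ is absolutely simple. Here $L_{D_1}(t)=4t^4+2t^3+t+1$, so $P_{D_1}(t)=t^4+t^3+2t+4$, which one checks is irreducible over $\Q$ (no rational root and no factorization into two rational quadratics), so the $\alpha_i$ are Galois-conjugate algebraic integers of absolute value $\sqrt2$ forming two complex-conjugate pairs $\{\alpha,\bar\alpha\}$ with $\alpha\bar\alpha=2$.

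I would then split the argument by the $2$-adic valuations of the $\alpha_i$. By Lemma~\ref{prankD} the $2$-rank of the abelian surface $Jac(D_1)$ is $1$, which forces the Newton slopes to be $0,\tfrac12,\tfrac12,1$ (one may also read this off the coefficient valuations of $P_{D_1}$). For any pair with unequal $2$-adic valuations, the quotient has nonzero valuation and so cannot be a root of unity; this settles every pair except the conjugate pair $\{\beta,\bar\beta\}$ of slope $\tfrac12$. For that pair $\beta/\bar\beta=\beta^2/2$, and were this a root of unity $\zeta$, the field $\Q(\beta^2)=\Q(\zeta)$ would be cyclotomic, hence abelian over $\Q$. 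But $\Q(\beta^2)=\Q(\beta)$ --- it is neither $\Q$ nor a quadratic field, since $\beta$ has degree $4$ and $\beta^2=-b\beta-2$ with $b$ in the real quadratic subfield $\Q(\sqrt{17})$ --- and $\Q(\beta)$ is not abelian over $\Q$: the resolvent cubic of $P_{D_1}$ is $y^3-14y-8=(y-4)(y^2+4y+2)$, with a single rational root, so $\mathrm{Gal}(P_{D_1})\in\{C_4,D_4\}$; and since $P_{D_1}$ splits over $\Q(\sqrt{17})$ as $(t^2+\tfrac{1+\sqrt{17}}{2}t+2)(t^2+\tfrac{1-\sqrt{17}}{2}t+2)$ while $\sqrt2\notin\Q(\beta)$ (the only quadratic subfield of $\Q(\beta)$ being $\Q(\sqrt{17})$), the splitting field properly contains $\Q(\beta)$, forcing $\mathrm{Gal}(P_{D_1})=D_4$ and $\Q(\beta)$ non-normal, hence non-abelian. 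This contradiction establishes hypothesis~(2) and finishes the proof.

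The step I expect to be the main obstacle is exactly this verification of hypothesis~(2) --- essentially the absolute simplicity of $Jac(D_1)$ --- and within it the one genuinely subtle case is the slope-$\tfrac12$ conjugate pair, where the $2$-adic argument is silent and one must instead control the arithmetic of the quartic field cut out by a Frobenius eigenvalue. The remaining ingredients, namely passing from the sums $G_m^{(k)}$ to point counts and invoking Theorem~\ref{main}, are routine.
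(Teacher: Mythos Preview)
Your argument is correct and follows the same outline as the paper's proof: translate Conjecture~\ref{Helleseth-conj} into point counts, observe that for $k=p^a$ the condition $\gcd(m,k)=1$ is just $p\nmid m$, and then invoke Theorem~\ref{main} with $C=D_1$, $D=D_{p^a}$ and the integer $p$. The only substantive task, as you identify, is hypothesis~(2), which amounts to the absolute simplicity of $Jac(D_1)$.

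Where you diverge from the paper is in how that absolute simplicity is established. The paper notes that $L_{D_1}(t)=4t^4+2t^3+t+1$ is irreducible (so the Jacobian is simple over $\F_2$), recalls from Lemma~\ref{prankD} that it has $2$-rank~$1$, and then cites the result of Maisner--Nart that any simple abelian surface of $p$-rank~$1$ is absolutely simple; a companion lemma in the same reference then gives irreducibility of $L_{D_1}^{(r)}$ for every $r$, which is exactly hypothesis~(2). Your route is instead a direct, self-contained analysis of the quartic $P_{D_1}(t)=t^4+t^3+2t+4$: the Newton polygon disposes of all ratios $\alpha_i/\alpha_j$ except the one coming from the slope-$\tfrac12$ conjugate pair, and you handle that pair by showing $\Q(\beta)$ has Galois group $D_4$ (via the resolvent cubic and the factorization over $\Q(\sqrt{17})$) and hence is not cyclotomic. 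Both arguments are valid; the paper's is a one-line appeal to a structural classification, while yours is more hands-on and avoids the external reference. One small remark: your justification that $\Q(\beta^2)=\Q(\beta)$ is a bit elliptical as written --- the relation $\beta^2=-b\beta-2$ by itself only gives $\beta\in\Q(\sqrt{17},\beta^2)$ --- but it follows cleanly once you note directly from $\beta^4+\beta^3+2\beta+4=0$ that $\beta=-(\beta^4+4)/(\beta^2+2)\in\Q(\beta^2)$, or alternatively after you have excluded the $V_4$ case via the resolvent cubic.
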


\begin{proof}
This follows from Theorem \ref{main} (applied to $k=p$) with $C=D_1$ and $D=D_k$.
The first condition in Theorem \ref{main} holds by the hypotheses, 
since being relatively prime to $p^a$ is equivalent to not being divisible by $p$.
To show the second condition in Theorem \ref{main} it suffices to show that
the L-polynomial of $D_1(\F_{2^r})$ for each $r$ is irreducible.
Since $D_1$ has genus 2, its L-polynomial has degree 4 and the
L-polynomial for $\F_2$ is easily calculated to be $4t^4+2t^3+t+1$
which is irreducible. This implies that the Jacobian of $D_1(\F_{2})$ is simple.
By \cite{maisner-nart} any simple abelian surface of 2-rank 1 is absolutely simple, so the Jacobian is absolutely simple. 
By Lemma 2.12 of  \cite{maisner-nart}  the L-polynomial of $D_1(\F_{2^r})$ for each $r$ is therefore irreducible.
\end{proof}

For general $k$ we will first prove that
Conjecture~\ref{Helleseth-conj} implies Conjecture \ref{conjdk},
after a small lemma.

\begin{lemma}\label{maninapp}
Let $P\in 1+t{\mathbb Z}[t]$ be the L-polynomial of a curve with $p$-rank $0$.
Let $Q\in 1+t{\mathbb Z}[t]$ be the L-polynomial of a curve with $p$-rank $>0$.
Then $Q(t)$ does not divide $P(t)$.
\end{lemma}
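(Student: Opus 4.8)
The plan is to read the $p$-rank off the valuations of the Frobenius eigenvalues at a place above $p$. Fix a place $w$ of $\overline{\Q}$ lying over $p$. For a smooth projective curve $C$ of genus $g$ over a finite field of characteristic $p$, with Frobenius eigenvalues $\alpha_1,\dots,\alpha_{2g}$, a theorem of Manin (equivalently, the Newton polygon of the characteristic polynomial of Frobenius) identifies the $p$-rank of $C$ with the number of indices $i$, counted with multiplicity, for which $w(\alpha_i)=0$; this count does not depend on the choice of $w$. In particular $C$ has $p$-rank $0$ exactly when $w(\alpha_i)>0$ for every $i$, and $p$-rank $>0$ exactly when $w(\alpha_i)=0$ for at least one $i$.

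Granting this, the rest is immediate. Write $P(t)=\prod_i(1-\alpha_i t)$, where the $\alpha_i$ are the Frobenius eigenvalues of the $p$-rank $0$ curve, and $Q(t)=\prod_j(1-\beta_j t)$, where the $\beta_j$ are those of the $p$-rank $>0$ curve; all of these numbers are nonzero. If $Q$ divided $P$, then comparing factorizations into linear factors over $\overline{\Q}$ would force the multiset $\{\beta_j\}$ to be contained in the multiset $\{\alpha_i\}$; but some $\beta_{j_0}$ satisfies $w(\beta_{j_0})=0$ whereas $w(\alpha_i)>0$ for every $i$, which is incompatible with $\{\beta_j\}\subseteq\{\alpha_i\}$.

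Alternatively, one can stay within the results recalled earlier in the paper, at the (harmless) cost of assuming tacitly that the two curves are defined over the same $\F_q$. Let $C$ denote the $p$-rank $0$ curve (so $L_C=P$) and $D$ the $p$-rank $>0$ curve (so $L_D=Q$). From $Q\mid P$ one gets $P_D(t)\mid P_C(t)$ in $\Q[t]$, since L- and characteristic polynomials are exchanged by $t\mapsto 1/t$ and the L-polynomials have nonzero constant term. By Tate's theorem (Theorem~\ref{Tate}), $\mathrm{Jac}(D)$ is $\F_q$-isogenous to an abelian subvariety $A$ of $\mathrm{Jac}(C)$; by Poincar\'e's complete reducibility theorem, $\mathrm{Jac}(C)$ is $\F_q$-isogenous to $A\times B$, hence to $\mathrm{Jac}(D)\times B$, for some abelian variety $B/\F_q$. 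Since the $p$-rank is an isogeny invariant that is additive on products, $0=r_p(\mathrm{Jac}(C))=r_p(\mathrm{Jac}(D))+r_p(B)\ge r_p(\mathrm{Jac}(D))>0$, a contradiction.

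I do not expect a genuine obstacle: the entire content of the argument sits in the first paragraph, namely the precise statement of Manin's theorem relating the $p$-rank to the valuations of the Frobenius eigenvalues; in the second route the only points needing care are the routine passage from divisibility of L-polynomials to divisibility of characteristic polynomials and the appeal to Poincar\'e reducibility to extract the complementary factor $B$.
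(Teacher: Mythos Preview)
Both of your arguments are correct. Your first route is essentially the paper's, only phrased through eigenvalues rather than coefficients: the paper invokes Manin's theorem in the form ``the $p$-rank equals the degree of the mod-$p$ reduction of the L-polynomial,'' so that $\overline{P(t)}=1$ while $\deg\overline{Q(t)}>0$, and then notes that $Q\mid P$ in $\Z[t]$ would force $\gcd(P,Q)=Q$ yet $\overline{\gcd(P,Q)}=1$, a contradiction. Your valuation statement and the paper's degree statement are two faces of the same Newton-polygon fact, and the concluding step (multiset of reciprocal roots versus reduction of a divisibility) is the same idea seen from either side. Your second route, via Tate's theorem and Poincar\'e complete reducibility, is genuinely different from the paper's; it replaces a one-line polynomial computation by heavier structural input (Theorem~\ref{Tate}, complete reducibility, and the isogeny-invariance and additivity of the $p$-rank recalled in the introduction), but it makes transparent the conceptual content of the lemma, namely that divisibility of L-polynomials forces an inequality of $p$-ranks.
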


\begin{proof}
A theorem of Manin states that the $p$-rank is equal to the degree
of the mod $p$ reduction of the L-polynomial.
Therefore the mod $p$ reduction of $P(t)$, denoted $\overline{P(t)}$,  is 1.

Let $d(t)=\gcd (P(t),Q(t))$. Then  $\overline{d(t)}=1$, and $\overline{Q(t)}\not= 1$,
so $d(t)\not= Q(t)$.
\end{proof}

\begin{thm}\label{12to6}
Conjecture~\ref{Helleseth-conj} implies Conjecture \ref{conjdk}.
\end{thm}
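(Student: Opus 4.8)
The plan is to derive Conjecture~\ref{conjdk} from Conjecture~\ref{Helleseth-conj} by a two-step argument: first produce \emph{some} divisibility from Theorem~\ref{main}, then upgrade it to the divisibility of $L_{D_k}$ by $L_{D_1}$ using the $p$-rank obstruction of Lemma~\ref{maninapp}. Concretely, fix $k>1$ and let $p$ be any prime divisor of $k$. Conjecture~\ref{Helleseth-conj} (in its curve formulation) says $\#D_1(\F_{2^m})=\#D_k(\F_{2^m})$ whenever $\gcd(m,k)=1$; in particular this holds whenever $\gcd(m,k)=1$. I would like to apply Theorem~\ref{main} with $C=D_1$, $D=D_k$ and the integer $p$, but the first hypothesis of Theorem~\ref{main} requires equality of point counts for \emph{all} $m$ not divisible by $p$, which is a weaker restriction on $m$ than $\gcd(m,k)=1$ when $k$ has more than one prime factor. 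So a direct application only works when $k$ is a prime power, which is exactly the content of Theorem~\ref{jacdiv1}; for general $k$ I expect this mismatch to be the main obstacle.

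To get around it, I would instead run the argument prime-by-prime in a way that only uses exponents coprime to $k$. Write $k=p_1^{a_1}\cdots p_r^{a_r}$. For each fixed $i$, I cannot directly invoke Theorem~\ref{main}, but I can re-run its \emph{proof}: the hypothesis gives $\sum_i\alpha_i^m=\sum_j\beta_j^m$ for all $m$ with $\gcd(m,k)=1$, which after exponentiating yields an identity of the zeta-function-type products over the index set $\{m:\gcd(m,k)=1\}$. Decomposing this index set via inclusion--exclusion over the primes $p_1,\dots,p_r$ (rather than via the single congruence $p\nmid m$), one obtains a polynomial identity of the shape $L_{D_1}(t)^{N}\cdot(\text{something})=L_{D_k}(t)^{N}\cdot(\text{something in } t^{p_i}\text{'s})$, from which $L_{D_1}(t)^{N-1}$ divides $L_{D_k}(t)^{N}$ times a product of factors $\prod(1-\zeta\alpha_i t)$ with $\zeta$ a nontrivial root of unity of order dividing some $p_i$. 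The coprimality of $L_{D_1}(t)$ with those twisted factors still follows from hypothesis~(2) of Theorem~\ref{main}, which holds here because (as in Theorem~\ref{jacdiv1}) $L_{D_1}=4t^4+2t^3+t+1$ is irreducible, $\mathrm{Jac}(D_1)$ is absolutely simple by \cite{maisner-nart}, so $L_{D_1}^{(n)}$ is irreducible for all $n$ and its roots have distinct $k$-th powers. Concluding as in the proof of Theorem~\ref{main} (using that $L_{D_1}$ is squarefree), one gets $L_{D_1}(t)\mid L_{D_k}(t)$, which is precisely Conjecture~\ref{conjdk}.

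An alternative and cleaner route, which I would actually prefer to write down, avoids re-deriving Theorem~\ref{main} and instead leverages Lemma~\ref{maninapp} together with an intermediate curve. The idea: reduce the general $k$ to the prime-power case already handled in Theorem~\ref{jacdiv1}. Pick a prime $p\mid k$ and set $k'=p^{v_p(k)}$, the $p$-part of $k$. By Theorem~\ref{jacdiv1}, Conjecture~\ref{Helleseth-conj} implies $L_{D_1}(t)\mid L_{D_{k'}}(t)$. One then needs to relate $D_k$ back to $D_1$; here one would use that $\gcd(m,k)=1$ implies $\gcd(m,p)=1$, hence the point counts of $D_k$ and $D_1$ agree on a set of $m$ that, combined across the various primes $p\mid k$, still forces enough Frobenius-eigenvalue identities. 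The bookkeeping needed to patch the prime-power statements into a statement about $D_k$ itself is where the real work lies, and I expect that to be the crux; a careful inclusion--exclusion over the primes dividing $k$, applied to the Newton-power-sum identities $\sum\alpha_i^m=\sum\beta_j^m$, should suffice.

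The final ingredient, needed to make either route honest, is that the quotient $L_{D_k}(t)/L_{D_1}(t)$ is genuinely a polynomial over $\Z$ and not merely over $\Q$: this is automatic once $L_{D_1}\mid L_{D_k}$ in $\Q[t]$ because $L_{D_1}$ is monic up to the leading-coefficient normalization and its reciprocal $P_{D_1}$ is a monic integer polynomial, so by Gauss's lemma the cofactor has integer coefficients. The role of Lemma~\ref{maninapp} in the write-up is to rule out spurious factorizations: since $D_1$ has $2$-rank $1>0$ by Lemma~\ref{prankD}, and any curve whose $L$-polynomial is a proper factor arising from the twisted-root product has $2$-rank $0$ (its mod-$2$ reduction is $1$), Lemma~\ref{maninapp} guarantees that the factor of $L_{D_k}$ we extract is exactly $L_{D_1}$ and not some unrelated piece. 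I anticipate no difficulty in the Gauss's-lemma step or the $p$-rank step; the single hard point, as noted, is the combinatorial passage from ``coprime to $k$'' to ``not divisible by $p$'' so that Theorem~\ref{main}'s machinery applies.
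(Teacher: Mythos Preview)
Your proposal correctly isolates the central obstacle --- Conjecture~\ref{Helleseth-conj} only constrains $m$ with $\gcd(m,k)=1$, not all $m$ with $p\nmid m$ --- but neither of your routes is the paper's, and the key organizing idea is missing. The paper argues by \emph{induction on the number of prime divisors of $k$}. Writing $k=p_1^{a_1}\cdots p_m^{a_m}$ and $k'=k/p_m^{a_m}$, the inductive hypothesis gives $L_{D_{k'}}=L_{D_1}\cdot P$ with $P\in 1+t\,\Z[t]$. One then compares $D_k$ with $D_{k'}$ (not with $D_1$) and reruns the computation of Theorem~\ref{main} for the single prime $p_m$ to obtain
\[
L_{D_k}(t)^{p_m}\,L_{D_{k'}}^{(p_m)}(t^{p_m})=L_{D_{k'}}(t)^{p_m}\,L_{D_k}^{(p_m)}(t^{p_m}).
\]
Substituting $L_{D_{k'}}=L_{D_1}\cdot P$ and arguing as in Theorem~\ref{main} shows that $L_{D_1}(t)^{p_m-1}$ divides $L_{D_k}(t)^{p_m}\cdot P^{(p_m)}(t^{p_m})$. \emph{This} is where Lemma~\ref{maninapp} enters: since $D_1$ and $D_{k'}$ both have $2$-rank~$1$ (Lemma~\ref{prankD}), the cofactor $P$ has $2$-rank~$0$, hence so does $P^{(p_m)}(t^{p_m})$, and the irreducible $L_{D_1}$ cannot divide it. That forces $L_{D_1}\mid L_{D_k}$.

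Your Route~2 chooses the wrong intermediate curve --- the $p$-part $p^{v_p(k)}$ rather than $k$ with one prime stripped off --- so no induction gets off the ground. Your Route~1 (a single inclusion--exclusion identity directly between $L_{D_1}$ and $L_{D_k}$) is a genuinely different idea that might be pushed through, but you do not carry it out, and in that approach Lemma~\ref{maninapp} would play no role: coprimality of $L_{D_1}$ with the twisted factors $\prod_{\zeta\neq 1}(1-\zeta\alpha_i t)$ already follows from the distinct-$k$-th-powers hypothesis. Your stated ``role of Lemma~\ref{maninapp}'' is in fact incorrect: the twisted-root quotient $L_{D_1}^{(k)}(t^k)/L_{D_1}(t)$ does \emph{not} reduce to $1$ mod~$2$ (it reduces to $1+t+\cdots+t^{k-1}$). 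The $p$-rank lemma is needed precisely because the paper's inductive step introduces the unknown factor $P$, whose relation to $L_{D_1}$ cannot be controlled by the roots-of-unity argument alone.
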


\begin{proof}
We proceed by induction on the number of prime divisors of $k$. If $k=p^a$ for some prime $p$, then this follows from Theorem \ref{jacdiv1}.
Let  $k=p_1^{a_1}\cdots p_m^{a_m}$ where $p_1,\ldots,p_m$ are distinct primes, and let $k'=k/p_m^{a_m}$. 

Conjecture~\ref{Helleseth-conj} implies that $\#D_k(\F_{2^r})=\#D_{k'}(\F_{2^r})$ for every $r$ which is not divisible by $p_m$. An argument similar to the proof of Theorem~\ref{main} shows that 
\[
L_{D_k}(t)^{p_m}L_{D_{k'}}^{(p_m)}(t^{p_m})=L_{D_{k'}}(t)^{p_m}L_{D_k}^{(p_m)}(t^{p_m}). 
\]

By induction hypothesis, $L_{D_{k'}}(t)=L_{D_1}(t)\cdot P(t)$ for some 
$P\in 1+t{\mathbb Z}[t]$. Note that, since both $L_{D_{k'}}$ and $L_{D_1}$ have $p$-rank $1$, $P$ must have $p$-rank $0$. 
Then we have
$$
L_{D_k}(t)^{p_m}L_{D_{1}}^{(p_m)}(t^{p_m})P^{(p_m)}(t^{p_m})=L_{D_{1}}(t)^{p_m}P(t)^{p_m}L_{D_k}^{(p_m)}(t^{p_m}).
$$
Since the $p_m$-th powers of the roots of $L_{D_1}$ are distinct, we can deduce as in the proof of Theorem~\ref{main} that $L_{D_{1}}(t)^{p_m-1}$ divides $L_{D_k}(t)^{p_m}P^{(p_m)}(t^{p_m})$. However $L_{D_{1}}(t)$ cannot divide $P(t^{p_m})$, as the latter has $p$-rank $0$, by Lemma \ref{maninapp}. 
Therefore, since $L_{D_{1}}(t)$ is irreducible it must divide $L_{D_k}(t)$.
\end{proof}

Before extending the argument to show that Conjecture~\ref{Helleseth-conj} implies Conjecture \ref{conjdk2} when $k$ has at most 2 prime factors, we prove two lemmas.

\begin{lemma}\label{radical-subring-2}
Suppose that $f_1(x),f_2(x),g_1(x)$ and $g_2(x)$ are polynomials in $\Z[x]$ where $f_i(0)\neq 0$ for $i=1,2$  and $(f_1(x)/f_2(x))^n=g_1(x^k)/g_2(x^k)$ for positive integers $k$ and $n$. Then there are polynomial $h_1(x)$ and $h_2(x)$ in $\Z[x]$ so that $f_1(x)/f_2(x)=h_1(x^k)/h_2(x^k)$. 
\end{lemma}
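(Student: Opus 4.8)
The plan is to run the argument of Lemma~\ref{radical-subrings} in the field $\C(x)$ of rational functions instead of the ring $\Z[x]$, after first passing to a reduced fraction. Concretely, I would write $f_1=d\,e_1$, $f_2=d\,e_2$ with $d=\gcd(f_1,f_2)$ in $\Q[x]$, so that $e_1,e_2\in\Q[x]$ are coprime; then, by Gauss's lemma, $e_1/e_2=(a/b)\,\tilde{e}_1/\tilde{e}_2$ where $\tilde{e}_1,\tilde{e}_2\in\Z[x]$ are primitive and coprime. Since $f_i(0)\ne 0$ we have $d(0)\ne 0$ and $e_i(0)\ne 0$, hence $\tilde{e}_i(0)\ne 0$. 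It then suffices to prove $\tilde{e}_i(x)=\tilde{h}_i(x^k)$ for some $\tilde{h}_i\in\Z[x]$: taking $h_1:=a\tilde{h}_1$ and $h_2:=b\tilde{h}_2$ gives $h_1,h_2\in\Z[x]$ with $f_1(x)/f_2(x)=h_1(x^k)/h_2(x^k)$.

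For the core step, let $\zeta_k$ be a primitive $k$-th root of unity. From the hypothesis together with $f_1/f_2=(a/b)\tilde{e}_1/\tilde{e}_2$, the element $(\tilde{e}_1/\tilde{e}_2)^n$ equals a rational function in $x^k$, so substituting $\zeta_k x$ for $x$ leaves the right-hand side unchanged and yields
\[
\left(\frac{\tilde{e}_1(\zeta_k x)}{\tilde{e}_2(\zeta_k x)}\right)^n=\left(\frac{\tilde{e}_1(x)}{\tilde{e}_2(x)}\right)^n
\]
in $\C(x)$. Since two elements of a field with the same $n$-th power differ by an $n$-th root of unity, and the only roots of unity in $\C(x)$ are constants (by unique factorization in $\C[x]$), we get $\tilde{e}_1(\zeta_k x)/\tilde{e}_2(\zeta_k x)=\zeta_n\,\tilde{e}_1(x)/\tilde{e}_2(x)$ for some $\zeta_n$ with $\zeta_n^n=1$; evaluating at $x=0$ and using $\tilde{e}_i(0)\ne 0$ forces $\zeta_n=1$, so $\tilde{e}_1(\zeta_k x)/\tilde{e}_2(\zeta_k x)=\tilde{e}_1(x)/\tilde{e}_2(x)$.

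It remains to descend from this fraction identity to the two polynomials. Because $x\mapsto\zeta_k x$ is an automorphism of $\C[x]$, $\tilde{e}_1(\zeta_k x)$ and $\tilde{e}_2(\zeta_k x)$ remain coprime, so the equality of fractions in lowest terms forces $\tilde{e}_i(\zeta_k x)=c\,\tilde{e}_i(x)$ for a common constant $c\in\C^\times$; evaluating at $0$ gives $c=1$, hence $\tilde{e}_i(\zeta_k x)=\tilde{e}_i(x)$ for $i=1,2$. Comparing coefficients, every exponent occurring in $\tilde{e}_i$ is divisible by $k$, i.e.\ $\tilde{e}_i(x)=\tilde{h}_i(x^k)$ with $\tilde{h}_i\in\Z[x]$ having the same coefficients as $\tilde{e}_i$. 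Combined with the reduction of the first paragraph, this proves the lemma.

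I do not expect a serious obstacle here, since every step is formal. The one point requiring care is the bookkeeping in the first paragraph — reducing to primitive coprime integer polynomials and checking that the stray rational constant $a/b$ can be absorbed while keeping $h_1,h_2\in\Z[x]$ — together with the descent in the third paragraph from the equality of fractions to the two separate polynomial identities; the only non-formal external input is that $n$-th roots in $\C(x)$ are unique up to a constant root of unity, which is immediate from unique factorization in $\C[x]$.
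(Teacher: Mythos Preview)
Your proof is correct and follows essentially the same route as the paper's: reduce to a coprime fraction, substitute $\zeta_k x$ and use $f_i(0)\ne 0$ to kill the stray root of unity, then use coprimality to pass from invariance of the fraction to invariance of numerator and denominator separately. The only difference is cosmetic—your reduction step tracks the rational constant $a/b$ via primitive parts, whereas the paper simply replaces $f_1,f_2$ by $f_1/d,\,f_2/d$ with $d=\gcd(f_1,f_2)$ in $\Z[x]$ and works with those directly.
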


\begin{proof}
Without loss of generality we may assume that $f_1$ and $f_2$ are coprime. Let $\zeta_k$ be a primitive $k$-th root of unity. Then 
\begin{eqnarray*}
(f_1(\zeta_kx)/f_1(\zeta_kx))^n&=&g_1((\zeta_kx)^k)/g_2((\zeta_kx)^k)\\
&=&g_1(x^k)/g_2(x^k)\\
&=&(f_1(x)/f_2(x))^n.
\end{eqnarray*}
Thus 
\[ \frac{f_1(x)}{f_2(x)}=\zeta_n \frac{f_1(\zeta_kx)}{f_2(\zeta_kx)}
\]
 for some $n$-th root of unity $\zeta_n$. If we let $x=0$, from the fact that $f_i(0)\neq 0$ for $i=1,2$ we derive that $\zeta_n=1$, and hence
\[ \frac{f_1(x)}{f_2(x)}= \frac{f_1(\zeta_kx)}{f_2(\zeta_kx)}
\]
which is equivalent to  
$$
f_1(x)f_2(\zeta_kx)=f_2(x) f_1(\zeta_kx).
$$
Now we know that $f_1$ and $f_2$ have no common root, so from the equation above it follows that $a$ is a root of $f_1$ with multiplicity $l$ if and only if $\zeta_k a$ is a root of $f_1$ with multiplicity $l$. Thus $f_1(x)=h_1(x^k)$ for some $h_1$. Similarly,  $f_2(x)=h_2(x^k)$ for some $h_2$.

\end{proof}

\begin{lemma}\label{rattopoly}
 Let $f\in 1+T\cdot{\mathbb Z}[T]$ and let $p,q$ be relatively prime integers such that there exist polynomials $g_1,g_2,h_1,h_2\in 1+T\cdot{\mathbb Z}[T]$ with
$$
f(t)=\frac{g_1(t^p)g_2(t^q)}{h_1(t^p)h_2(t^q)}.
$$
Then there exist polynomials $f_1,f_2\in 1+T\cdot{\mathbb Z}[T]$ such that $f(t)=f_1(t^p)f_2(t^q)$. 
\end{lemma}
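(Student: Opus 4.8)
The plan is to read the conclusion off the multiset of reciprocal roots of $f$. Since $f(0)=1$, write $f(t)=\prod_i(1-\gamma_i t)$ over $\overline{\mathbb{Q}}$ and encode the $\gamma_i$ as a formal sum $D_f=\sum_i[\gamma_i]$ in the group ring $\mathbb{Z}[\overline{\mathbb{Q}}^\times]$; then $D_f$ has non-negative coefficients and is stable under $\mathrm{Gal}(\overline{\mathbb{Q}}/\mathbb{Q})$. For a root of unity $\zeta$ write $\zeta$ also for the ``multiply by $\zeta$'' operator on $\mathbb{Z}[\overline{\mathbb{Q}}^\times]$, let $\mu_n$ be the group of $n$-th roots of unity, and over $\mathbb{Q}$ let $e_p=\frac{1}{p}\sum_{\zeta\in\mu_p}\zeta$ and $e_q=\frac{1}{q}\sum_{\omega\in\mu_q}\omega$ be the averaging idempotents; since $\gcd(p,q)=1$ they commute and $e_pe_q=e_{pq}$. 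The point is that $f(t)=f_1(t^p)f_2(t^q)$ with $f_1,f_2\in 1+T\mathbb{Z}[T]$ is, once Gauss's lemma takes care of integrality, exactly the assertion that $D_f$ decomposes as the sum of a non-negative, Galois-stable, $\mu_p$-invariant divisor and a non-negative, Galois-stable, $\mu_q$-invariant divisor.

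The first step is an auxiliary identity. For each $\zeta\in\mu_p$, substituting $t\mapsto\zeta t$ in the hypothesis and dividing by the original shows that $f(\zeta t)/f(t)=\dfrac{g_2(\zeta^q t^q)h_2(t^q)}{g_2(t^q)h_2(\zeta^q t^q)}$ is a rational function of $t^q$ with value $1$ at the origin; multiplying over $\zeta\in\mu_p$, the product $\prod_{\zeta\in\mu_p}f(\zeta t)/f(t)^p$ is again a rational function of $t^q$, say $\rho(t^q)$. On the other hand $\prod_{\zeta\in\mu_p}f(\zeta t)$ is invariant under $t\mapsto\zeta_0t$ for $\zeta_0\in\mu_p$ and under $\mathrm{Gal}(\overline{\mathbb{Q}}/\mathbb{Q})$, hence equals $F(t^p)$ for some polynomial $F$ with $F(0)=1$. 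Passing to reciprocal divisors in $F(t^p)=f(t)^p\rho(t^q)$ gives $D_{F(t^p)}=p\,D_f+D_{\rho(t^q)}$, where $D_{F(t^p)}$ is $\mu_p$-invariant (divisor of a polynomial in $t^p$) and $D_{\rho(t^q)}$ is $\mu_q$-invariant (divisor of a rational function of $t^q$). Applying $(1-e_p)(1-e_q)=(1-e_q)(1-e_p)$, which annihilates $\mu_p$-invariant and $\mu_q$-invariant elements respectively, kills everything except $p\,(1-e_p)(1-e_q)D_f$, so $(1-e_p)(1-e_q)D_f=0$.

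The combinatorial core — which I expect to be the main obstacle to state cleanly — is to turn this operator identity into an honest splitting. Since $\mu_{pq}=\mu_p\times\mu_q$ acts freely on $\overline{\mathbb{Q}}^\times$, each orbit has exactly $pq$ elements and, after a choice of base point, may be identified with $\mu_p\times\mu_q$ as a $\mu_p\times\mu_q$-set; $D_f$ then restricts to a function $\phi\colon\mu_p\times\mu_q\to\mathbb{Z}_{\ge 0}$, and $(1-e_p)(1-e_q)D_f=0$ says exactly that the mixed second difference of $\phi$ vanishes, i.e. $\phi(\zeta,\omega)=u(\omega)+v(\zeta)$ for integer-valued $u,v$ (integrality because differences of values of $\phi$ are integers), and then $u,v$ can be taken $\ge 0$ by replacing $u(\omega)$ with $\min_\zeta\phi(\zeta,\omega)$. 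Globally I would define $D_1$ to have coefficient at $\gamma$ equal to the minimum of the coefficients of $D_f$ over the $\mu_p$-orbit of $\gamma$: by the orbitwise description $D_1$ is $\mu_p$-invariant, non-negative and Galois-stable (Galois fixes $\mu_p$ setwise), and $D_2:=D_f-D_1$ is $\mu_q$-invariant, non-negative and Galois-stable. Grouping the support of $D_1$ (resp. $D_2$) into $\mu_p$-orbits (resp. $\mu_q$-orbits) and using $\prod_{\zeta\in\mu_p}(1-\zeta\delta t)=1-\delta^pt^p$ yields $f_1,f_2\in 1+T\mathbb{Q}[T]$ with $\prod_\gamma(1-\gamma t)^{(D_1)_\gamma}=f_1(t^p)$ and $\prod_\gamma(1-\gamma t)^{(D_2)_\gamma}=f_2(t^q)$; then $f(t)$ and $f_1(t^p)f_2(t^q)$ have the same reciprocal divisor $D_f$ and constant term $1$, so they agree. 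Finally, since $f\in\mathbb{Z}[t]$ is primitive with $f(0)=1$, Gauss's lemma upgrades $f(t)=f_1(t^p)f_2(t^q)$ over $\mathbb{Q}$ to a factorization over $\mathbb{Z}$ with both factors having constant term $1$, so $f_1,f_2\in 1+T\mathbb{Z}[T]$. The functional-equation manipulation in the first step and the Gauss argument in the last are routine; the delicate point is the passage from $(1-e_p)(1-e_q)D_f=0$ to a non-negative, Galois-compatible splitting, for which the ``minimum over the $\mu_p$-orbit'' construction is the key device.
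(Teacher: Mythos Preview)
Your argument is correct and takes a genuinely different route from the paper's. The paper proceeds by induction on $\deg f+\deg h_1+\deg h_2$: it picks a reciprocal root $\alpha$ of $f$, notes that the full $\mu_p$-orbit of $\alpha$ lies in $g_1(t^p)$ (say), and then splits into cases according to whether that whole orbit lies in $f$; when it does not, a further chase through $h_2(t^q)$ and back into $g_1(t^p)$ shows that an entire $\mu_{pq}$-orbit divides $g_1(t^p)$, and one peels off a factor and reduces the degree. Your proof instead extracts a single clean consequence of the hypothesis, the operator identity $(1-e_p)(1-e_q)D_f=0$, and then solves the resulting combinatorial problem globally with the explicit ``minimum over the $\mu_p$-orbit'' construction; no induction or case analysis is needed. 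What your approach buys is an explicit formula for $f_1$ and $f_2$ and a transparent reason why the result is true (additive separability of the multiplicity function on each $\mu_p\times\mu_q$-orbit); it also isolates an intermediate statement that is the natural candidate for generalization to more than two primes---precisely the situation where the paper's inductive root-chasing becomes awkward. The paper's argument, on the other hand, stays entirely within elementary polynomial manipulations and avoids the group-ring language and the Gauss-lemma bookkeeping at the end.
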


\begin{proof}
We may assume without loss of generality that $g_1,h_1$ are relatively prime (if they have a common factor, then they have a commmon factor of the form $P(t^p)$). Similarly with $g_2,h_2$.

We proceed by induction on $n=\deg(f)+\deg(h_1)+\deg(h_2)$, the case $n=0$ being obvious. Let $\alpha$ be a (reciprocal) root of $f(t)$. Then $\alpha$ is a root of either $g_1(t^p)$ or of $g_2(t^q)$. Without loss of generality, let us assume that it is a root of the former. Then $\zeta_p^i\alpha$ is also a root for every $i=0,\ldots,p-1$, where $\zeta_p$ is a primitive $p$-th root of unity. We now distinguish two cases:

If $\zeta_p^i\alpha$ is a root of $f(t)$ for every $i=0,\ldots,p-1$, then $f(t)$ and $g_1(t^p)$ are divisible by $\prod_i(1-\zeta_p^i\alpha T)=1-\alpha^p T^p$. Let $a(t^p)\in 1+T^p{\mathbb Z}[T^p]$ be the product of all Galois conjugates of $1-\alpha^pt^p$. Then $f(t)$ and $g_1(t^p)$ are divisible by $a(t^p)$, and we have
$$
\frac{f(t)}{a(t^p)}=\frac{\frac{g_1(t^p)}{a(t^p)}g_2(t^q)}{h_1(t^p)h_2(t^q)}.
$$
By induction hypothesis there exist $\hat f_1,\hat f_2\in 1+T\cdot{\mathbb Z}[T]$ such that $\frac{f(t)}{a(t^p)}=\hat f_1(t^p)\hat f_2(t^q)$, and we take $f_1(t)=a(t)\hat f_1(t)$, $f_2(t)=\hat f_2(t)$.

Now suppose that there exists some $i_0$ such that $\zeta_p^{i_0}\alpha$ is not a root of $f(t)$. Then from $f(t)h_1(t^p)h_2(t^q)=g_1(t^p)g_2(t^q)$ we get that $\zeta_p^{i_0}\alpha$ is a root of $h_2(t^q)$, and then so is $\zeta_q^j\zeta_p^{i_0}\alpha$ for every $j=0,\ldots,q-1$, where $\zeta_q$ is a primitive $q$-th root of unity. Since $g_2$ and $h_2$ are relatively prime, $\zeta_q^j\zeta_p^{i_0}\alpha$ is a root of $g_1(t^p)$ for every $j=0,\ldots,q-1$. Then $\zeta_q^j\zeta_p^i\alpha$ is a root of $g_1(t^p)$ for every $i,j$, so $g_1(t^p)$ is divisible by $1-\alpha^{pq}t^{pq}$. Let $b(t^{pq})$ be the product of all Galois conjugates of $1-\alpha^{pq}t^{pq}$, then $g_1(t^p)$ is divisible by $b(t^{pq})$, let $g_1(t^p)=g_1'(t^p)b(t^{pq})$. We have
$$
f(t)h_1(t^p)h_2(t^q)=g_1'(t^p)(b(t^{pq})g_2(t^q)).
$$

If $b(t^{pq})$ and $h_2(t^q)$ are not relatively prime, then they have a common factor of the form $p(t^q)$. Dividing both of them by that factor we get $f(t)h_1(t^p)h'_2(t^q)=g_1'(t^p)g'_2(t^q)$, and we conclude by induction hypothesis since $\deg(h'_2)<\deg(h_2)$. Otherwise, $b(t^{pq})$ must divide $f(t)$. Writing $f(t)=f'(t)b(t^{pq})$ we get
$$
f'(t)h_1(t^p)h_2(t^q)=g_1'(t^p)g_2(t^q)
$$
and again we conclude by induction.
\end{proof}

Finally we prove the main result of this section.

\begin{thm}
If $k$ has at most 2 prime factors, Conjecture~\ref{Helleseth-conj} implies Conjecture \ref{conjdk2}.
\end{thm}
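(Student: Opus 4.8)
Here is how I would attack the final theorem.

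The plan is to separate off the one genuinely new case and then push it through the machinery already assembled in this section. If $k=1$ the statement is trivial, and if $k$ is a prime power it is Theorem~\ref{jacdiv1}; so the only case left is $k=p^{a}q^{b}$ with $p,q$ distinct primes and $a,b\ge 1$. Set $k_1=k/q^{b}=p^{a}$. Since $k_1$ is a prime power, Theorem~\ref{jacdiv1} gives $L_{D_{k_1}}(t)=q_1(t^{p})L_{D_1}(t)$ for some $q_1\in 1+t\Z[t]$, while Theorem~\ref{12to6} lets us write $L_{D_k}(t)=R(t)L_{D_1}(t)$ with $R\in 1+t\Z[t]$. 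The whole theorem then reduces to showing $R(t)=f_1(t^{p})f_2(t^{q})$ for some $f_1,f_2\in 1+t\Z[t]$, which is exactly Conjecture~\ref{conjdk2} for this $k$.

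First I would produce one polynomial identity linking $R$ and $q_1$. Exactly as in the proof of Theorem~\ref{12to6}, Conjecture~\ref{Helleseth-conj} forces $\#D_k(\F_{2^{r}})=\#D_{k_1}(\F_{2^{r}})$ for every $r$ not divisible by $q$, and the argument used there (modelled on the proof of Theorem~\ref{main}, needing only the step that produces the identity~\eqref{eqn-L_D,L_C}, not the distinctness hypothesis) gives
\[
L_{D_k}(t)^{q}L_{D_{k_1}}^{(q)}(t^{q})=L_{D_{k_1}}(t)^{q}L_{D_k}^{(q)}(t^{q}),
\]
where for $P=\prod(1-\gamma_i t)$ we write $P^{(q)}=\prod(1-\gamma_i^{q}t)$ as usual. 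Substituting $L_{D_k}=R\cdot L_{D_1}$ and $L_{D_{k_1}}=q_1(t^{p})\cdot L_{D_1}$, and using that $P\mapsto P^{(q)}$ respects products together with the identity $\bigl(q_1(t^{p})\bigr)^{(q)}=q_1^{(q)}(t^{p})$ --- valid because raising to the $q$-th power permutes the $p$-th roots of unity, which is where $\gcd(p,q)=1$ enters --- one cancels the common factor $L_{D_1}(t)^{q}L_{D_1}^{(q)}(t^{q})$ and obtains
\[
R(t)^{q}q_1^{(q)}(t^{pq})=q_1(t^{p})^{q}R^{(q)}(t^{q}),\qquad\text{equivalently}\qquad\Bigl(\tfrac{R(t)}{q_1(t^{p})}\Bigr)^{q}=\frac{R^{(q)}(t^{q})}{q_1^{(q)}(t^{pq})}.
\]

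Then I would observe that the right-hand side is a quotient of two polynomials in $t^{q}$, so Lemma~\ref{radical-subring-2} (with $n=k=q$) gives $R(t)/q_1(t^{p})=h_1(t^{q})/h_2(t^{q})$ for some $h_1,h_2$, and a short argument with constant terms and Gauss's lemma shows $h_1,h_2$ may be taken in $1+t\Z[t]$. Hence
\[
R(t)=\frac{q_1(t^{p})\,h_1(t^{q})}{h_2(t^{q})},
\]
which is precisely the shape required by Lemma~\ref{rattopoly} for the relatively prime exponents $p,q$ (numerator the product of $q_1(t^{p})$, a polynomial in $t^{p}$, and $h_1(t^{q})$, a polynomial in $t^{q}$; denominator $h_2(t^{q})$; all three in $1+t\Z[t]$). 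Lemma~\ref{rattopoly} then yields $f_1,f_2\in 1+t\Z[t]$ with $R(t)=f_1(t^{p})f_2(t^{q})$, so $L_{D_k}(t)=f_1(t^{p})f_2(t^{q})L_{D_1}(t)$, as required. (The analogous identity coming from $k_2=k/p^{a}=q^{b}$ is available by symmetry but is not needed once Lemma~\ref{rattopoly} is in hand; it is exactly this single‑variable factorization of $L_{D_{k_1}}/L_{D_1}$ that fails once $k$ has three or more prime factors.)

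The hard part will be the integrality bookkeeping rather than any single conceptual move: one must check that $R$, $q_1$, their images under $P\mapsto P^{(q)}$, and the polynomials $h_1,h_2$ produced by Lemma~\ref{radical-subring-2} all have integer coefficients and constant term $1$, so that Lemma~\ref{rattopoly} is genuinely applicable. That $R=L_{D_k}/L_{D_1}$ is a polynomial lying in $1+t\Z[t]$ is Conjecture~\ref{conjdk}, already supplied by Theorem~\ref{12to6}; the remaining integrality assertions reduce to Gauss's lemma applied to the L-polynomials of $D_1$, $D_{k_1}$ and $D_k$ over $\F_{2^{q}}$, each of which has content $1$.
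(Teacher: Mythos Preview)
Your proposal is correct and follows essentially the same route as the paper's proof: reduce to $k=p^{a}q^{b}$, use Theorems~\ref{jacdiv1} and~\ref{12to6} to write $L_{D_{k_1}}=q_1(t^{p})L_{D_1}$ and $L_{D_k}=R\,L_{D_1}$, derive from the main identity of Theorem~\ref{12to6} that $(R/q_1(t^p))^{q}$ is a ratio of polynomials in $t^{q}$, apply Lemma~\ref{radical-subring-2}, and finish with Lemma~\ref{rattopoly}. The only cosmetic difference is that the paper leaves $P=q_1(t^{p})$ unsubstituted when invoking Lemma~\ref{radical-subring-2} (so the identity $(q_1(t^{p}))^{(q)}=q_1^{(q)}(t^{p})$ is never needed, since $P^{(q)}(t^{q})$ is already visibly a polynomial in $t^{q}$), whereas you substitute first; and you are more explicit than the paper about the integrality bookkeeping for $R$, $h_1$, $h_2$.
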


\begin{proof}
The prime power case is Theorem \ref{jacdiv1}, so suppose that $k=p_1^{a_1}p_2^{a_2}$.

Following the proof of Theorem \ref{12to6}, 
and assuming that $$L_{D_{k'}}(t)=P(t)L_{D_1}(t)$$ and  $$L_{D_{k}}(t)=Q(t) L_{D_1}(t)$$ 
from the main equation in Theorem \ref{12to6} we get that
$$
Q(t)^{p_2} P(t)^{(p_2)}(t^{p_2})=P(t)^{p_2} Q(t)^{(p_2)}(t^{p_2}).
$$ 
This is equivalent to 
$$
\biggl(\frac{Q(t)}{P(t)}\biggl)^{p_2}=\frac{Q^{(p_2)}(t^{p_2})}{P^{(p_2)}(t^{p_2})}.
$$
Applying Lemma \ref{radical-subrings}, we have that
$$
\frac{Q(t)}{P(t)}=\frac{h_1(t^{p_2})}{h_2(t^{p_2})}.
$$
From this and Theorem  \ref{jacdiv1} we have
 $$L_{D_{k}}(t)=Q(t) L_{D_1}(t)=\frac{h_1(t^{p_2})}{h_2(t^{p_2})}P(t)L_{D_1}(t)=\frac{h_1(t^{p_2})}{h_2(t^{p_2})}q_1(t^{p_1})L_{D_1}(t)$$
 for some $q_1(t)\in\Z[t]$.
 By Lemma \ref{rattopoly} the proof is complete.
\end{proof}
When $k$ has more than 2 prime factors,  it is not hard to see that the proofs of the theorem above and Theorem~\ref{12to6} along with an induction can be used to prove that Conjecture~\ref{Helleseth-conj} implies a weaker version of Conjecture~\ref{conjdk2} where the polynomials in Conjecture~\ref{conjdk2} are replaced with ratios of polynomials. More precisely, we have the following theorem. Notice that the proof of the converse claim in the following is similar to the proof of Theorem~\ref{converse}.
\begin{thm}
Let $k=p_1^{a_1}\cdots p_m^{a_m}$ be the prime factorization of $k$,
where $p_1,\ldots,p_m$ are distinct primes. Then Conjecture~\ref{Helleseth-conj} implies that
\[
L_{D_k}(t)=\frac{q_{m,1}(t^{p_m})}{q_{m,2}(t^{p_m})}\frac{q_{m-1,1}(t^{p_{m-1}})}{q_{m-1,2}(t^{p_{m-1}})}\cdots\frac{q_{2,1}(t^{p_{2}})}{q_{2,2}(t^{p_{2}})} q_1 (t^{p_1})L_{D_1}(t)
\]
for some polynomials $q_{i,1}(t),q_{i,2}(t)$ and $q_1(t)$ in $\Z[t]$. Conversely, if $ L_{D_k}(t)$ is in the above form, then Conjecture~\ref{Helleseth-conj} is true. 
\end{thm}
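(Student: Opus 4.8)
The plan is to establish the forward implication by induction on $m$, the number of distinct primes dividing $k$, following the pattern of Theorem~\ref{12to6} but tracking a rational-function correction at each stage rather than insisting on polynomial divisibility. The base case $m=1$, where $k=p_1^{a_1}$ is a prime power, is exactly Theorem~\ref{jacdiv1}, which gives $L_{D_k}(t)=q_1(t^{p_1})L_{D_1}(t)$ with $q_1\in\Z[t]$. For the inductive step I would put $k'=k/p_m^{a_m}$, so $k'$ has $m-1$ distinct prime factors and, by the inductive hypothesis,
\[
L_{D_{k'}}(t)=\frac{q_{m-1,1}(t^{p_{m-1}})}{q_{m-1,2}(t^{p_{m-1}})}\cdots\frac{q_{2,1}(t^{p_{2}})}{q_{2,2}(t^{p_{2}})}\,q_1(t^{p_1})\,L_{D_1}(t).
\]
As recorded in the proof of Theorem~\ref{12to6}, Conjecture~\ref{Helleseth-conj} forces $\#D_k(\F_{2^r})=\#D_{k'}(\F_{2^r})$ for every $r$ not divisible by $p_m$, and feeding this into the argument of Theorem~\ref{main} (with exponent $p_m$ in place of ``$k$'', and $C=D_{k'}$, $D=D_k$) produces the polynomial identity
\[
L_{D_k}(t)^{p_m}\,L_{D_{k'}}^{(p_m)}(t^{p_m})=L_{D_{k'}}(t)^{p_m}\,L_{D_k}^{(p_m)}(t^{p_m}).
\]

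Next I would rewrite this identity as $\bigl(L_{D_k}(t)/L_{D_{k'}}(t)\bigr)^{p_m}=L_{D_k}^{(p_m)}(t^{p_m})/L_{D_{k'}}^{(p_m)}(t^{p_m})$. Here $L_{D_k}^{(p_m)}$ and $L_{D_{k'}}^{(p_m)}$ are the L-polynomials of $D_k$ and $D_{k'}$ over $\F_{2^{p_m}}$, hence lie in $1+t\Z[t]$, so the right-hand side is genuinely of the form $g_1(t^{p_m})/g_2(t^{p_m})$ with $g_1,g_2\in\Z[t]$ and $g_i(0)=1$. Lemma~\ref{radical-subring-2} then applies, with the rational function $L_{D_k}/L_{D_{k'}}$ in the role of $f_1/f_2$ (its numerator and denominator in lowest terms having constant term $\pm1$, since $L_{D_k}(0)=L_{D_{k'}}(0)=1$), and gives $L_{D_k}(t)/L_{D_{k'}}(t)=h_1(t^{p_m})/h_2(t^{p_m})$ for some $h_1,h_2\in\Z[x]$ with nonzero constant term. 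Multiplying the inductive expression for $L_{D_{k'}}$ by $h_1(t^{p_m})/h_2(t^{p_m})$ and setting $q_{m,1}=h_1$, $q_{m,2}=h_2$ closes the induction. Note that Lemma~\ref{rattopoly} plays no role here, precisely because the statement only asks for a product of rational functions, not of polynomials.

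For the converse I would argue exactly as in Theorem~\ref{converse}. Given $L_{D_k}(t)$ in the stated form, take logarithms in $\Q[[t]]$ to get
\[
\log L_{D_k}(t)=\log L_{D_1}(t)+\sum_{i=2}^m\bigl(\log q_{i,1}(t^{p_i})-\log q_{i,2}(t^{p_i})\bigr)+\log q_1(t^{p_1}),
\]
which is legitimate after normalizing the $q_{i,j}$ to have nonzero constant term (automatic for the polynomials produced above, and harmless in general as $L_{D_k}(0)=L_{D_1}(0)=1$). Every term on the right except $\log L_{D_1}(t)$ is a power series in $t^{p_i}$ for some $i$; since each $p_i$ divides $k$, the coefficient of $t^m$ in all these terms vanishes whenever $\gcd(m,k)=1$, so $\log L_{D_k}$ and $\log L_{D_1}$ share the same $t^m$-coefficient, which as in Theorem~\ref{converse} says $\#D_k(\F_{2^m})=\#D_1(\F_{2^m})$, i.e.\ $G_m^{(k)}=G_m^{(1)}$.

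The only genuinely delicate input is the implication, from Conjecture~\ref{Helleseth-conj}, that $D_k$ and $D_{k'}$ have equal point counts over $\F_{2^r}$ for all $r$ not divisible by $p_m$ — not merely when $\gcd(k',r)=1$; but that is already carried out inside the proof of Theorem~\ref{12to6} and is simply quoted here. Beyond it, the points needing attention are pure bookkeeping: checking that $L\mapsto L^{(p_m)}$ keeps us in $\Z[t]$ (immediate, since it is base change to $\F_{2^{p_m}}$), and keeping the lowest-terms/constant-term data consistent so that Lemma~\ref{radical-subring-2} is applied to a legitimate reduced fraction and the resulting $q_{m,1},q_{m,2}$ still satisfy its hypotheses at the next stage of the induction. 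The rest is a faithful repetition of the proofs of Theorems~\ref{main}, \ref{converse} and \ref{12to6}.
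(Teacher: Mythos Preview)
Your proposal is correct and follows essentially the same approach the paper sketches: induct on the number of prime factors, invoke the polynomial identity from the proof of Theorem~\ref{12to6} at each step, and apply Lemma~\ref{radical-subring-2} (the rational-function version of Lemma~\ref{radical-subrings}) to extract the ratio $h_1(t^{p_m})/h_2(t^{p_m})$; the converse is handled by the logarithm argument of Theorem~\ref{converse}, exactly as the paper indicates. Your explicit remark that Lemma~\ref{rattopoly} is not needed here is also on point, since the statement only demands ratios of polynomials.
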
 
 
\section{Acknowledgements}
We would like to thank Robert Granger, Igor Shparlinski and Alexey Zaytsev for many insightful discussions during the preparation of this paper.

 %%%%%%%%%%%%%%%%%%%%%%%%%%%%%%%%%%%%%%%%%%%%%%%%%%%%%%%%%%%

\end{document}